\theoremstyle{plain}
\newtheorem{theorem}{Theorem}[section]
\newtheorem{proposition}[theorem]{Proposition}
\newtheorem{lemma}[theorem]{Lemma}
\newtheorem{corollary}[theorem]{Corollary}
\theoremstyle{definition}
\newtheorem{definition}[theorem]{Definition}
\newtheorem{remark}[theorem]{Remark}
\newcommand{\Q}{\mathbb{Q}} 
\newcommand{\N}{\mathbb{N}} 
\newcommand{\Z}{\mathbb{Z}} 
\newcommand{\R}{\mathbb{R}} 
\newcommand{\sub}{\subseteq} 
\newcommand{\set}[2]{\{#1|\ #2\}} 
\newcommand{\conv}{\mathrm{conv}} 
\newcommand{\gen}[1]{\langle #1\rangle} 
\newcommand{\pure}[1]{\sqrt{#1}} 
\newcommand{\ri}{\mathrm{ri}} 
\newcommand{\x}{\textit{\textbf{x}}} 
\begin{document}

\title[Idempotence of finit. gen. commutative semifields]{Idempotence of finitely generated \\ commutative semifields}

\author[V.~Kala]{V\'{i}t\v{e}zslav~Kala}

\address{Charles University, Faculty of Mathematics and Physics, Department of Algebra, Sokolovsk\'{a} 83, 186 75 Prague 8, Czech Republic}

\address{University of G\"{o}ttingen, Mathematisches Institut, Bunsenstr.~3-5, D-37073 G\"{o}ttingen, Germany}

\email{vita.kala@gmail.com}

\author[M.~Korbel\'{a}\v{r}]{Miroslav~Korbel\'{a}\v{r}}

\address{Department of Mathematics, Faculty of Electrical Engineering, Czech Technical University in Prague, Technick\'{a} 2, 166 27 Prague 6, Czech Republic}

\email{miroslav.korbelar@gmail.com}

\thanks{The first author was supported by Neuron Impulse award and by Charles University Research Centre
	program UNCE/SCI/022.}

\keywords{commutative semiring, parasemifield, idempotent, convex, affine monoid}
\subjclass[2010]{primary: 12K10, 16Y60, 20M14, secondary: 11H06, 52A20}



\begin{abstract}
We prove that a commutative parasemifield $S$ is additively idempotent provided that it is finitely generated as a semiring. Consequently, every proper commutative semifield $T$ that is finitely generated as a semiring is either additively constant or additively idempotent. As part of the proof, we use the classification of finitely generated lattice-ordered groups to prove that a certain monoid associated to the parasemifield $S$ has a distinguished geometrical property called prismality.
\end{abstract}

\maketitle

\section{Introduction}

It is easy to see that the field $\mathbb Q$ of rational numbers is not finitely generated as a ring. More generally, 
a \emph{folklore} theorem (perhaps due to Kaplansky) states that if a commutative ring is simple and finitely generated, then it is finite. Of course, such rings are precisely the finite fields $\mathbb F_q$ and the zero-multiplication rings $\Z_p$ of prime order. This result can also be viewed as a classification of all finitely generated simple commutative rings.

Semirings often behave similarly as rings (see e.g., \cite{golan} for an overview). Thus it is natural to ask whether a similar result as  above  also holds in the more general setting of semirings. 
In this paper we will not consider the other natural generalization to non-commutative rings; in fact, all algebraic structures will be commutative throughout the paper. Nevertheless, let us mention at least one of the latest results on simple non-commutative rings \cite{lam}.

\begin{definition}
 Recall that by a (commutative) \emph{semiring} we mean a non-empty set $S$ equipped with two associative and commutative operations (addition and multiplication) where the multiplication distributes over the addition from both sides. 
A semiring $S$ is a \emph{semifield} if it contains a zero element $0$ and the set $S\setminus\{0\}$ is a group with respect to the multiplication. In the case that the entire multiplicative part $S(\cdot)$ is a group, the semiring $S$ is called a \emph{parasemifield}. A semiring (a semifield, resp.) is called \emph{proper} if it is not a ring. Finally, a semiring $S$ is \emph{additively idempotent} if $x+x=x$ for all $x\in S$ and  \emph{additively constant} if the map $S\times S\to S$, $(x,y)\mapsto x+y$ is constant. 
\end{definition}

In mathematics, semirings and semifields are ubiquitous,  which makes them one of the fundamental algebraic objects. Perhaps the first mathematical structure one encounters, the set of natural numbers $\mathbb N$, is a semiring. Besides this obvious observation, semirings and semifields play an important role in modern  mathematics as well as in a wide range of applications.
Let us mention a few of them:
Tropical geometry, which is essentially algebraic geometry over additively idempotent semirings, is useful in studying piecewise linear functions in optimization problems (e.g., \cite{Ga, ims, ir} and the references therein). Tropical semirings are also used in constructing cluster algebras \cite{keller} and appear in the process of so called dequantization \cite{litvinov}.
In number theory, Connes and Consani \cite{cc,cc2} were motivated by the goal of working over the ``field of one element" \cite{F1_geometry} (related to semirings) and extended this viewpoint further with a certain hope of proving Riemann hypothesis.
Their work was recently generalized by Leichtnam \cite{Le} to cover more general additively idempotent semifields.
An interesting direction is also the study of cryptography based on semirings, as developed by Maze, Monico, Rosenthal, Zumbr\"{a}gel, and others \cite{mmr,monico,zumbragel}. It could help in coping with some of the vulnerabilities of classical cryptography based on modular arithmetic. {Semirings are also important for weighted automata in theoretical computer science \cite{droste}.}
Yet another class of applications arises thanks to the correspondence between certain semifields, lattice-ordered groups, and MV-algebras. These provide useful tools in multi-valued logic \cite{BDN-C, BDNF-B, BDNF, DNG, dinola, M, mundici}.  For further applications and references, see e.g. \cite{golan, IKN, KNZ}.

\

In this paper we are interested in studying finitely generated simple semirings.
Unfortunately, the situation quickly becomes more convoluted than in the case of rings. 
First of all, ideals in semirings do not correspond to congruences, and so one has to distinguish between \emph{congruence-} and \emph{ideal-simple} semirings (i.e., those that have only the trivial congruences, and those in which there are no proper ideals).
Both of these cases were (almost completely) classified by Bashir, Hurt, Jan\v{c}a\v{r}\'{i}k, and Kepka \cite{simple}. It has turned out that there are semirings which are  finitely generated, both congruence- and ideal-simple, and yet \emph{infinite}  -- for instance, the additively idempotent tropical semiring $\mathbb Z(\oplus, \odot)$ with the semiring addition $a\oplus b=\min(a, b)$ and multiplication $a\odot b=a+b$. On the other hand, every \emph{finite} congruence- or ideal-simple \emph{proper} semiring is either additively constant or additively idempotent \cite{simple}.

Hence we need to modify the \emph{folklore} theorem to also deal with these cases. For congruence-simple semirings, this quite easily follows from the classification using \cite[Corollary 14.3]{simple}: 
Every proper finitely generated congruence-simple
semiring is either additively constant or additively idempotent.

The main result of this paper is the proof of an analogous result for ideal-simple semirings:

\begin{theorem}\label{main thm}
(a) Every parasemifield that is finitely generated as a semiring is additively idempotent.

(b) Every proper semifield that is finitely generated as a semiring is either additively constant or additively idempotent.

(c) Every proper finitely generated ideal-simple semiring is either additively constant or additively idempotent.
\end{theorem}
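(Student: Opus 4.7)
My plan is to reduce part (c) to part (b), and (b) to (a), concentrating the effort on (a). For (c), one can invoke the classification of finitely generated ideal-simple commutative semirings from \cite{simple} to reduce to cases already covered by (a), (b), and the Kaplansky-type folklore theorem for finite simple commutative rings mentioned in the introduction. For (b), let $T$ be a proper finitely generated semifield. I would first observe that $T\setminus\{0\}$ is closed under addition: if $a+b=0$ with $a,b\ne 0$ then $1+ba^{-1}=0$, producing an additive inverse of $1$ and forcing $T$ to be a ring, contrary to properness. Hence $T\setminus\{0\}$ is itself a parasemifield, finitely generated as a semiring by the nonzero elements of any generating set of $T$. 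Part (a) then delivers additive idempotence on $T\setminus\{0\}$; using $x+0=x$ this extends to all of $T$, with the additively constant clause covering only the degenerate one-element case.

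For part (a), let $S$ be a finitely generated commutative parasemifield. The multiplicative group $G=(S,\cdot)$ is abelian and finitely generated. The plan is to encode the addition of $S$ as extra structure on $G$: the preorder $a\preceq b$ defined by $a+c=b$ for some $c\in S$ is compatible with multiplication, and after passing to a suitable quotient should upgrade $G$ to a finitely generated abelian lattice-ordered group $H$. The classification of finitely generated abelian $\ell$-groups as subdirect products of totally ordered subgroups of $\mathbb R$ then provides an embedding $H\hookrightarrow\mathbb R^d$, letting one view $S$ coordinatewise inside a tropical-like structure. The generators of $S$ determine a finitely generated submonoid $M\subseteq H$ sitting inside the lattice $\mathbb Z^d$, i.e., an affine monoid, and the classification should force $M$ to satisfy the convex-geometric property the abstract calls \emph{prismality}: heuristically, that $\conv(M)$ decomposes as a product of a bounded polytope and a linear subspace.

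The crux is to translate prismality of $M$ into additive idempotence of $S$. The idea I would pursue is that in each $\mathbb R$-factor of the subdirect decomposition, the addition of $S$ projects to a tropical operation ($\min$ or $\max$), which is automatically idempotent; prismality of $M$ should then force these coordinate projections to determine the full addition on $S$, yielding $x+x=x$. The hardest step I anticipate is establishing prismality itself: one must rule out all simultaneous \emph{positive slopes} in the embedded picture, which seems to require a subtle combinatorial argument coupling finite generation of $S$ as a semiring with the lattice structure imported from the $\ell$-group classification. Once prismality is in hand, the passage to additive idempotence should be a comparatively short tropical-geometric argument.
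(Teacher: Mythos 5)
Your outline for part (a) shares the paper's high-level skeleton (pass to an additively idempotent quotient, invoke the classification of finitely generated $\ell$-groups, extract a geometric property of an associated monoid), but two of its load-bearing steps are gaps, not proofs. First, you begin by asserting that $G=(S,\cdot)$ is a finitely generated abelian group. This is not available: finite generation of $S$ as a semiring only says every element is an $\mathbb{N}$-polynomial in the generators and gives no control on the multiplicative group; the paper obtains ``$S$ is finitely generated as a multiplicative semigroup'' only as a \emph{corollary} of the main theorem, so you are assuming at the outset a deep consequence of the very result being proved. Second, and more fatally, the passage from prismality to $x+x=x$ is missing, and the mechanism you propose cannot work: the $\ell$-group/tropical structure you want to coordinatize lives on the largest additively idempotent factor $U=S_{/\sim}$ (where $x\sim y$ iff $xy^{-1},yx^{-1}\in Q_S$), not on $S$ itself. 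The coordinate projections are idempotent precisely because they factor through $U$, so saying they ``determine the full addition on $S$'' is exactly the assertion that $S\to U$ is injective, i.e., the theorem itself; prismality cannot be cited for this without an actual argument. The paper closes this gap by a genuinely different, ring-theoretic mechanism: it proves $\mathcal{C}_X(S)=\mathcal{C}_{X'}(U)$, so prismality transfers to the exponent monoid of $S$, and then shows that $S$ is additively idempotent iff the Grothendieck ring $G(Q_S)$ is trivial, proving triviality by a downward induction over the face decomposition of the prismal monoid, using that additively divisible elements of finitely generated rings vanish, to conclude that every $[\x^{\alpha}]$ is nilpotent. Nothing of this kind (nor any substitute) appears in your sketch; the anticipated ``comparatively short tropical-geometric argument'' does not exist. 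Two further inaccuracies feed this gap: finitely generated abelian $\ell$-groups are \emph{not} subdirect products of totally ordered subgroups of $\mathbb{R}$ (the relevant classification of Busaniche--Cabrer--Mundici, as used by Kala, produces lexicographic orders along rooted trees, which are non-archimedean, and these lexicographic phenomena are exactly what the prismality proof must handle), and your monoid $M$ (generated by the images of the generators) is finitely generated by fiat and carries little information, whereas the paper's monoid $\mathcal{C}_X(S)=\set{\alpha\in\N_0^n}{\x^{\alpha}\in Q_S}$ is typically \emph{not} finitely generated and prismality is the subtler condition that $\overline{\mathcal{C}\cap V}$ is finitely generated for every subspace $V$ (together with purity), not ``polytope times subspace''.

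Part (b) is also wrong as argued. Under the paper's conventions the zero of a semifield is a multiplicatively absorbing element and need not be additively neutral; consequently the additively constant case is not ``the degenerate one-element case'' but the family $T=G\cup\{o\}$ with $G$ any finitely generated abelian group, $a+b=o$ and $a\cdot o=o$ for all $a,b$ --- proper, finitely generated, and neither a ring nor additively idempotent. Your dichotomy breaks on these examples: there $1+1=o$ with $1\neq o$, yet no ring structure arises, because the step ``$1+ba^{-1}=0$ produces an additive inverse of $1$'' silently uses $x+0=x$. The paper instead derives (b) and (c) from (a) via the structure results of Je\v{z}ek, Kala, and Kepka \cite{jk,a note}, which correctly account for the additively constant branch.
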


This statement can be now understood as an extension of the \emph{folklore} theorem referred in the beginning, i.e., that every (commutative) field that is finitely generated as a ring is finite. Of course, the greatest difference is the existence of additively idempotent semifields.

Since every semifield is clearly ideal-simple, part (b) of the theorem follows immediately from (c). 
Also,
one can be slightly more precise in the additively constant case: this occurs if and only if there is a finitely generated (multiplicative) abelian group $G(\cdot)$ and the semiring is the semifield 
$S:=G\cup \{o\}$, where $o$ is  a new element. Operations that extend the multiplication on $G$ are defined by $a+ b=o$ and $a\cdot o=o$ for all $a, b\in S$.

\

Theorem \ref{main thm} was at first formulated as a conjecture in \cite{simple} for the infinite cases. Our version is a slight modification that considers \emph{proper} semirings instead of \emph{infinite} ones and that includes in this way also the finite cases, whose properties were mentioned above.
The equivalence of (a) and (c) was then established by Je\v zek, Kala, and Kepka \cite{jk,a note}, and so it remained to prove (a). 
Initial steps in this direction were done by the present authors and Kepka \cite{notes}, and continued together with Je\v zek \cite{jezek}, where part (a) was proved in the case of 2 generators. Note that there are no parasemifields that are 1-generated as semirings \cite[Remark 4.22]{notes}.

As we already mentioned, the goal of this paper is to prove Theorem \ref{main thm} in general (by proving Theorem \ref{main-theorem}) 
and to settle in this way a problem that remained open for more than fifteen years.  
The general idea of the proof uses a suitable subsemiring $Q_S$ of the parasemifield $S$ and an associated monoid $\mathcal C_X(S)\subseteq \mathbb N_0^n$ (see Section \ref{section 2} for the definitions). In the 2-generated case \cite{jezek}, the monoid $\mathcal C_X(S)$ was simple enough to consider only  elementary properties of the geometry to prove the theorem. 
However, in the general case the monoid has a much more complicated shape, and so the situation is significantly harder.

One key ingredient in our proof comes from the classification of finitely generated lattice-ordered groups ($\ell$-groups) by Busaniche, Cabrer, and Mundici \cite{BCM}. 
There is a well-known term-equivalence between $\ell$-groups and additively idempotent parasemifields, and so
Kala \cite{l-groups} has recently used their results to obtain a classification of additively idempotent parasemifields which are finitely generated as semirings (see Definition \ref{rooted_tree} below). 
The monoid $\mathcal C_X(T)$ associated to each of these additively idempotent parasemifields $T$ has a special geometric property, \textit{prismality} (introduced in Section \ref{section 1}). Now given a general parasemifield $S$ that is finitely generated as a semiring, we consider its largest factor-pa\-ra\-semi\-field that is additively idempotent.
The associated monoids of both these parasemifields are the same, and so we conclude that the monoid of $S$ is also prismal. This then gives us the crucial missing geometrical information that allows us to finish the proof of Theorem \ref{main-theorem} (i.e., to confirm  Conjecture  \ref{main thm}).

As in the case of rings, these results then imply a classification of all finitely generated ideal-simple semirings:
 if such a semiring is, moreover, a parasemifield then it must be one from Definition \ref{rooted_tree}.
 The extension to general ideal-simple semirings is then a routine application of the classification theorems of \cite{simple,jk}, and so we don't state it explicitly here.
Note that an analogous result was recently proved by Schneider and Zumbr\"{a}gel for simple compact (not necessarily commutative) semirings \cite{sz}.

\

As for the organization of the paper, in the second section we introduce a property of submonoids of $(\Z^n,+)$ called \emph{prismality} and study its basic properties. In the third section we prove in Theorem \ref{prismality} that every monoid $\mathcal C_X(S)$ associated to a finite set $X$, that generates a parasemifield $S$ as a semiring, is prismal. Finally, the fourth section uses this result to prove  Theorem \ref{main-theorem}. 

Let us conclude this introduction by pointing out 
that the ideas presented in this paper can probably be generalized to the situation of additively divisible semirings.
Another very interesting generalization of our results and methods is to apply them to the Banach semifield setting of Leichtnam \cite{Le}. This should (hopefully) allow us to generalize and extend his results (e.g., to remove the Assumption 2) -- we also plan to study this in the (near) future.

\section{Prismal monoids}\label{section 1}

 In this section we define a property of  submonoids  of $(\Z^n,+)$ called \emph{prismality} and study its behaviour. 

First, we need some definitions. Every vector space considered in this paper is assumed to be a \emph{real} vector space.  A vector subspace $V\sub\R^{n}$ is said \emph{to be defined over $\Q$} if $V$ has a basis that consists of vectors from $\Q^{n}$.  Of course, this is equivalent to assuming that $V$ has a basis of vectors from $\Z^n$.

Let $M$ be a subset of $\R^{n}$. By $\gen{M}$ we denote the \emph{vector subspace of $\R^{n}$ generated by $M$}, by  $\overline{M}^{\R^{n}}$ the usual \emph{topological closure
of $M$} and by $\conv(M)$ the \emph{convex hull of $M$}. Further, we denote by $\dim(M)$ the \emph{dimension of the convex hull} $\conv(M)$. Note that in the case when $M$ is a cone or a monoid, this is the dimension of the vector space $\gen{M}$. 

By a \emph{cone} $K\sub\R^{n}$ we mean a convex set such that for every non-negative real number $\lambda$ and every $u\in K$ we have $\lambda u\in K$. When working with a convex set $A\sub\R^{n}$ we will use the notion of the \emph{relative interior of} $A$, denoted as $\ri(A)$, that is defined as the interior of $A$ with respect to the affine hull of $A$.

\begin{definition}\label{def}
For a submonoid $\mathcal{C}$ of $(\Z^n,+)$ define its \emph{closure} $\overline{\mathcal{C}}$ as $$\overline{\mathcal{C}}=\Z^n\cap \overline{\conv(\mathcal{C})}^{\R^{n}}\ .$$ 

Further put $$\pure{\mathcal{C}}=\set{\alpha\in \Z^n}{(\exists k\in\N)\ k\alpha\in\mathcal{C}}.$$

We say that a monoid $\mathcal{C}\sub \Z^n$ is
\begin{itemize}
 \item {\it pure} if $\pure{\mathcal{C}}=\mathcal{C}$.
 \item {\it almost prismal} if for every vector subspace $V$ of $\R^n$, the monoid $\overline{\mathcal{C}\cap V}$ is finitely generated.
 \item {\it prismal} if it is pure and almost prismal.
\end{itemize}
\end{definition}

Finitely generated monoids are often called \emph{affine} (although we will not use this terminology). 
As is clear from the definition, almost prismal monoids are a generalization of affine monoids.

 We will be interested in properties of prismal monoids, namely whether they are closed under intersections, products, and homomorphic images. The answer is ``Yes!", 
but the arguments are not entirely easy and one has to be a little careful, as for example Remark \ref{remark 1.5} shows.

Before we can prove the closedness in Theorems \ref{change}, \ref{cartesian}, and \ref{epi_prismal}, we will need some technical results on cones and monoids. These are essentially known, but not easily located in the literature, so we also include (most of) their proofs.

\begin{proposition}\label{convex_0}\cite[Theorems 6.3 and 6.5]{relative_interior}
 Let $K, L\sub\R^{n}$ be cones. Then
 \begin{enumerate}
  \item\label{1} $\ri\left(\overline{K}^{\R^n}\right)= \ri(K)$  and $\overline{K}^{\R^{n}}= \overline{\ri(K)}^{\R^{n}}$.
  \item\label{1.5} $\ri(K\cap L)=\ri(K)\cap\ri(L)$ if $\gen{K}=\gen{L} =\gen{K\cap L}$.
 \end{enumerate}
\end{proposition}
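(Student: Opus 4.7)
My plan is to derive both statements from the classical \emph{line segment principle} of convex analysis: if $C\sub\R^n$ is convex, $y\in\ri(C)$, and $x\in\overline{C}^{\R^n}$, then $(1-t)y+tx\in\ri(C)$ for every $t\in[0,1)$. This single tool, together with the observation that affine hulls are preserved under topological closure, handles all four claims. I would first record the segment principle as a preliminary lemma (a short open-ball-plus-convexity argument), and then apply it repeatedly.

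For part (1), since the affine hull of $K$ coincides with that of $\overline{K}^{\R^n}$, both relative interiors live in the same ambient subspace. The inclusion $\ri(K)\sub\ri(\overline{K}^{\R^n})$ is trivial. For the reverse, given $x\in\ri(\overline{K}^{\R^n})$ and any fixed $y\in\ri(K)$, I would extend the segment from $y$ through $x$ slightly past $x$ to obtain a point $z=(1-\lambda)y+\lambda x\in\overline{K}^{\R^n}$ with $\lambda>1$; then $x=(1-1/\lambda)y+(1/\lambda)z$ lies on the open segment from $y$ to $z$, and the segment principle applied to $K$ gives $x\in\ri(K)$. For the equality $\overline{K}^{\R^n}=\overline{\ri(K)}^{\R^n}$, the inclusion $\supseteq$ is immediate, and the reverse follows because any $x\in\overline{K}^{\R^n}$ is the limit of $(1-t)y+tx$ as $t\to 1^-$, each approximant being in $\ri(K)$ by the segment principle.

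For part (2), the hypothesis $\gen{K}=\gen{L}=\gen{K\cap L}$, combined with the fact that $0$ belongs to all three cones, forces their affine hulls to coincide with a single common linear subspace $W$. The inclusion $\ri(K)\cap\ri(L)\sub\ri(K\cap L)$ is then direct: if $x\in\ri(K)\cap\ri(L)$, intersecting a small $W$-ball around $x$ contained in $K$ with one contained in $L$ produces a $W$-ball around $x$ contained in $K\cap L$, witnessing $x\in\ri(K\cap L)$. Conversely, any relative $W$-ball around $x\in\ri(K\cap L)$ lying in $K\cap L$ automatically lies in both $K$ and $L$, giving the opposite inclusion.

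The only real subtlety, and the reason for the spanning hypothesis in (2), is that in general the affine hull of an intersection can be strictly smaller than those of the two sets; were $W$ the affine hull of only $K$ and not of $K\cap L$, a $W$-ball around $x\in\ri(K\cap L)$ need not stay inside $K\cap L$, and the clean ball-intersection argument would collapse. Imposing $\gen{K}=\gen{L}=\gen{K\cap L}$ is precisely what rules out this pathology and makes the equality in (2) possible.
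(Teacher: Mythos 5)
Your proof is correct, but note that the paper itself contains no proof of this proposition: it is imported wholesale from Rockafellar's \emph{Convex Analysis} (Theorems 6.3 and 6.5), and that citation is the paper's entire argument. What you have written is essentially a self-contained reconstruction of the standard textbook proofs: your key lemma, the line segment principle, is Rockafellar's Theorem 6.1, and both your extension-past-$x$ argument for $\ri\left(\overline{K}^{\R^n}\right)\sub\ri(K)$ and your limiting argument for $\overline{K}^{\R^n}\sub\overline{\ri(K)}^{\R^n}$ are exactly the classical deductions of Theorem 6.3 from it. Your treatment of (ii) is in fact slightly more than a reconstruction, and this is worth emphasizing: Rockafellar's Theorem 6.5 is stated under the hypothesis $\ri(K)\cap\ri(L)\neq\emptyset$, whereas the paper's version assumes $\gen{K}=\gen{L}=\gen{K\cap L}$. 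Your ball argument works directly from the span hypothesis (using that nonempty cones contain $0$, so affine hulls coincide with linear spans), and it implicitly supplies the bridging observation one would need to invoke the literature result as stated, namely that under the span hypothesis $\ri(K\cap L)\sub\ri(K)\cap\ri(L)$, so the relative interiors do meet. Two small points you should make explicit in a final write-up: fixing $y\in\ri(K)$ uses that a nonempty convex set has nonempty relative interior (Rockafellar's Theorem 6.2), and the segment principle itself still needs its short proof; neither is a genuine gap, as both are standard, and the degenerate case $K=\emptyset$ is trivial.
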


\begin{proposition}\label{convex}
 Let $\mathcal{C},\mathcal{D}\sub\Z^{n}$ be pure monoids. Then
 \begin{enumerate}
  \item\label{5} $\Z^{n}\cap\conv(\mathcal{C})=\mathcal{C}$.
  \item\label{4}  $\conv(\mathcal{C})\cap\conv(\mathcal{D})=\conv\big(\mathcal{C}\cap \conv(\mathcal{D})\big)=\conv(\mathcal{C}\cap\mathcal{D})$.
 \end{enumerate}
\end{proposition}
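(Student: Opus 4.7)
For part (i), the inclusion $\mathcal{C} \sub \Z^n \cap \conv(\mathcal{C})$ is immediate, and the plan for the reverse is as follows. Given $z \in \Z^n \cap \conv(\mathcal{C})$, I write $z = \sum_{i=1}^m \lambda_i c_i$ as a convex combination with $c_i \in \mathcal{C}$. Invoking Carath\'{e}odory's theorem, I may assume the $c_i$ are affinely independent, which makes the $\lambda_i$ the unique barycentric solution of a linear system with integer data, hence rational. Clearing denominators by a common $k \in \N$ yields $kz = \sum_{i} (k\lambda_i) c_i$, a non-negative integer combination of elements of the monoid $\mathcal{C}$, so $kz \in \mathcal{C}$. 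Purity of $\mathcal{C}$ then delivers $z \in \mathcal{C}$.

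For part (ii), the middle equality is essentially formal once (i) is in hand: the inclusion $\mathcal{C} \cap \conv(\mathcal{D}) \sub \Z^n \cap \conv(\mathcal{D}) = \mathcal{D}$ forces $\mathcal{C} \cap \conv(\mathcal{D}) = \mathcal{C} \cap \mathcal{D}$. For the outer equality, the trivial direction is $\conv(\mathcal{C} \cap \mathcal{D}) \sub \conv(\mathcal{C}) \cap \conv(\mathcal{D})$. Before handling the reverse, I would record the small but useful observation that $\conv(\mathcal{C})$ coincides with the convex cone generated by $\mathcal{C}$: any non-negative real combination $\sum_i \alpha_i c_i$ rescales to a convex combination using $k c_i \in \mathcal{C}$ for sufficiently large $k \in \N$ and padding with $0 \in \mathcal{C}$.

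For the nontrivial inclusion $\conv(\mathcal{C}) \cap \conv(\mathcal{D}) \sub \conv(\mathcal{C} \cap \mathcal{D})$, I fix $x$ in the left-hand side and write $x = \sum_{i=1}^d \lambda_i c_i = \sum_{j=1}^e \mu_j d_j$ with $\lambda_i, \mu_j \geq 0$, $c_i \in \mathcal{C}$, $d_j \in \mathcal{D}$. The set
\[
\Lambda = \Bigl\{ (\alpha,\beta) \in \R_{\geq 0}^{d+e} : \sum_i \alpha_i c_i = \sum_j \beta_j d_j \Bigr\}
\]
is a rational polyhedral cone, cut out by integer linear equations together with non-negativity constraints, so by the Minkowski--Weyl theorem it is generated by finitely many integer extremal rays $(\alpha^{(s)}, \beta^{(s)})$, $s = 1, \dots, N$. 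For each $s$, the common value $z_s := \sum_i \alpha^{(s)}_i c_i = \sum_j \beta^{(s)}_j d_j$ lies in $\mathcal{C}$ (as a non-negative integer combination of the $c_i$) and simultaneously in $\mathcal{D}$, hence $z_s \in \mathcal{C} \cap \mathcal{D}$. Decomposing $(\lambda,\mu) = \sum_s \gamma_s (\alpha^{(s)},\beta^{(s)})$ with $\gamma_s \geq 0$ then exhibits $x = \sum_s \gamma_s z_s \in \conv(\mathcal{C} \cap \mathcal{D})$. The main obstacle I anticipate is the case when $x$ has irrational direction: one cannot simply find a lattice point on the ray through $x$ and invoke (i); passing to $\Lambda$ in coefficient space circumvents this, since $\Lambda$ is defined over $\Q$ regardless of the irrationality of $x$ or of its expansion coefficients.
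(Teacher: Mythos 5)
Your proposal is correct, and while part (i) and the middle equality of (ii) coincide with the paper's own argument (Carath\'{e}odory, rationality and non-negativity of the barycentric coordinates, purity; then the identity $\mathcal{C}\cap\conv(\mathcal{D})=\mathcal{C}\cap(\Z^n\cap\conv(\mathcal{D}))=\mathcal{C}\cap\mathcal{D}$), your treatment of the outer equality $\conv(\mathcal{C})\cap\conv(\mathcal{D})=\conv(\mathcal{C}\cap\mathcal{D})$ takes a genuinely different route. The paper reduces to the finitely generated case: given $y$ in the intersection, it chooses finitely generated submonoids $\mathcal{C}'\sub\mathcal{C}$ and $\mathcal{D}'\sub\mathcal{D}$ whose convex hulls already contain $y$, and then cites Bruns--Gubeladze \cite{bruns} for the equality $\conv(\mathcal{C}')\cap\conv(\mathcal{D}')=\conv(\mathcal{C}'\cap\mathcal{D}')$. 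You instead prove the needed inclusion from scratch by passing to coefficient space: your cone $\Lambda$ is rational polyhedral because its defining data are the integer vectors $c_i,d_j$ (not the possibly irrational point $x$), it lies in the non-negative orthant so Minkowski--Weyl gives finitely many integer generators, each generator evaluates to a point of $\mathcal{C}\cap\mathcal{D}$, and hence $x$ is a non-negative combination of points of $\mathcal{C}\cap\mathcal{D}$; combined with your preliminary observation that the convex hull of a submonoid of $\Z^n$ is a cone, this closes the argument. What each approach buys: the paper's is shorter but outsources the essential polyhedral geometry to the literature, whereas yours is self-contained, handles the finitely generated and general cases in one stroke (the reduction to finitely many generators is implicit in fixing the two representations of $x$), makes visible that purity is needed only for part (i) and the middle equality and not for the outer one, and explicitly circumvents the irrational-direction pitfall you identify --- precisely the point where a naive ``find a lattice point on the ray through $x$ and apply (i)'' argument would fail.
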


\begin{proof}

(i) Every $\alpha\in\Z^{n}\cap\,\conv(\mathcal{C})$ is a convex linear combination of a finite affinely independent subset of $\mathcal{C}$. Hence the coefficients in this combination have to be rational and due to the convexity also non-negative. Thus there is $k\in\N$
 such that $k\alpha\in\mathcal{C}$.  Since $\mathcal{C}$ is pure, we obtain that $\alpha\in\mathcal{C}$. The other inclusion is obvious.

 (ii) By (i), we see that $\mathcal C\cap \conv(\mathcal D)=\mathcal C\cap (\conv(\mathcal D)\cap\Z^n)=\mathcal C\cap\mathcal D$. Hence it is enough to show that $\conv(\mathcal{C})\cap\conv(\mathcal{D})=\conv(\mathcal{C}\cap\mathcal{D})$. This assertion holds if $\mathcal{C}$ and $\mathcal{D}$ are finitely generated (see \cite{bruns}). In the general case let $y\in\conv(\mathcal{C})\cap\conv(\mathcal{D})$. Then $y\in\conv(\mathcal{C}')\cap\conv(\mathcal{D}')$ for some finitely generated submonoids $\mathcal{C}'\sub\mathcal{C}$ and $\mathcal{D}'\sub\mathcal{D}$. Hence we have $y\in\conv(\mathcal{C}'\cap\mathcal{D}')\sub \conv(\mathcal{C}\cap\mathcal{D})$. The other inclusion is obvious.
\end{proof}

\begin{proposition}\label{closure_basic}
  Let $\mathcal{C}, \mathcal{D}\sub\Z^{n}$ be pure monoids and $V$ a vector subspace of $\R^{n}$. Then $$\overline{(\mathcal{C}\cap\mathcal{D})\cap V}=\overline{\mathcal{C}\cap W}\cap\overline{\mathcal{D}\cap W}$$ where $W=\gen{\mathcal{C}\cap\mathcal{D}\cap V}$.
\end{proposition}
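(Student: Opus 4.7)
Write $E := \mathcal{C}\cap\mathcal{D}\cap V$. The plan is to reduce both sides to closures of convex cones in the subspace $W$, apply Proposition \ref{convex}(ii) to identify these cones' intersection with $\conv(E)$, and then use the relative-interior machinery of Proposition \ref{convex_0} to push the closure through the intersection.

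First I would make a few bookkeeping observations: (a) an intersection of pure submonoids with a $\Q$-defined subspace is again pure (using $\alpha\in\Z^n,\ k\alpha\in V\Rightarrow \alpha\in V$), so $E$, $\mathcal{C}\cap W$, and $\mathcal{D}\cap W$ are all pure; (b) since $V$ is a vector subspace, $W=\gen{E}\subseteq V$, and therefore $\mathcal{C}\cap\mathcal{D}\cap W = E$; (c) $E\subseteq \mathcal{C}\cap W\subseteq W$ forces $\gen{\mathcal{C}\cap W}=W$, and likewise $\gen{\mathcal{D}\cap W}=W$. The inclusion ``$\subseteq$'' of the claim is then immediate: $E\subseteq \mathcal{C}\cap W$ and $E\subseteq \mathcal{D}\cap W$, and $\overline{\ \cdot\ }$ is monotone.

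For the nontrivial inclusion ``$\supseteq$'', set $K:=\conv(\mathcal{C}\cap W)$ and $L:=\conv(\mathcal{D}\cap W)$. Proposition \ref{convex}(ii) applied to the pure monoids $\mathcal{C}\cap W$ and $\mathcal{D}\cap W$ gives $K\cap L = \conv(E)$. By (c) we have $\gen K=\gen L = \gen{K\cap L}=W$, so Proposition \ref{convex_0}(\ref{1.5}) yields $\ri(K)\cap\ri(L)=\ri(K\cap L)$; in particular $\ri(K\cap L)\neq\emptyset$ (it has full dimension $\dim W$ inside $W$). Now pick any $p\in\ri(K\cap L)\subseteq\ri(K)\cap\ri(L)$ and take an arbitrary $\alpha\in\overline{K}^{\R^n}\cap\overline{L}^{\R^n}$. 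By the classical line-segment principle for convex sets (a direct consequence of Proposition \ref{convex_0}(\ref{1})), $tp+(1-t)\alpha$ lies in $\ri(K)\cap\ri(L)=\ri(K\cap L)$ for every $t\in(0,1]$, and letting $t\to 0^+$ gives $\alpha\in \overline{K\cap L}^{\R^n}=\overline{\conv(E)}^{\R^n}$. Thus $\overline{K}^{\R^n}\cap\overline{L}^{\R^n}\subseteq \overline{\conv(E)}^{\R^n}$. Intersecting both sides with $\Z^n$ yields $\overline{\mathcal{C}\cap W}\cap \overline{\mathcal{D}\cap W}\subseteq \overline{E}$, as required.

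The step I expect to be the main obstacle is the passage $\overline{K}\cap\overline{L}\subseteq \overline{K\cap L}$, since in general intersection does not commute with closure. The crucial leverage is that $K,L,K\cap L$ all share the same linear span $W$, which is exactly the hypothesis of Proposition \ref{convex_0}(\ref{1.5}); once this is in hand the line-segment trick completes the argument essentially mechanically. The other ingredients (purity is preserved under these operations, $W\subseteq V$, and $\conv$ distributes across the intersection of pure monoids) are routine invocations of the preceding propositions.
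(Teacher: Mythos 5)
Your proof is correct, and most of its skeleton coincides with the paper's: the same reduction to $W=\gen{\mathcal{C}\cap\mathcal{D}\cap V}$, the same bookkeeping that $E:=\mathcal{C}\cap\mathcal{D}\cap V=\mathcal{C}\cap\mathcal{D}\cap W$ and $\gen{\mathcal{C}\cap W}=\gen{\mathcal{D}\cap W}=W$, the same use of Proposition \ref{convex}\ref{4} to get $K\cap L=\conv(E)$ for your $K=\conv(\mathcal{C}\cap W)$, $L=\conv(\mathcal{D}\cap W)$, and of Proposition \ref{convex_0}\ref{1.5} to split the relative interior. The one step where you genuinely depart from the paper is the closing inclusion $\overline{K}^{\R^n}\cap\overline{L}^{\R^n}\sub\overline{K\cap L}^{\R^n}$. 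The paper handles it by pure bookkeeping with the two quoted propositions: it sets $M=\overline{K}^{\R^n}$, $N=\overline{L}^{\R^n}$, notes $\gen M=\gen N=\gen{M\cap N}=W$, applies Proposition \ref{convex_0}\ref{1.5} a \emph{second} time to these closed cones to get $\ri(M\cap N)=\ri(M)\cap\ri(N)=\ri(K)\cap\ri(L)=\ri(K\cap L)$, and then closes up via $M\cap N=\overline{M\cap N}^{\R^n}=\overline{\ri(M\cap N)}^{\R^n}$ from Proposition \ref{convex_0}\ref{1}. You instead fix $p\in\ri(K)\cap\ri(L)$ and invoke the line-segment (accessibility) principle, which is a more geometric but equally valid way to see the same thing. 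The only flaw is your parenthetical attribution: the line-segment principle is \emph{not} a direct consequence of Proposition \ref{convex_0}\ref{1}; it is Rockafellar's Theorem 6.1 in \cite{relative_interior}, and the logical dependence there runs the other way (Theorem 6.1 is the standard tool used to prove Theorem 6.3, i.e.\ Proposition \ref{convex_0}\ref{1}). Since that principle is classical, citing it as an external fact repairs this cosmetic slip; alternatively, applying Proposition \ref{convex_0}\ref{1.5} to the closed cones $M,N$, as the paper does, lets you avoid it altogether.
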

\begin{proof}
Set  $\mathcal{C}'=\mathcal{C}\cap W$ and $\mathcal{D}'=\mathcal{D}\cap W$.  Then $(\mathcal{C}\cap\mathcal{D})\cap V=\mathcal{C}'\cap \mathcal{D}'$. Hence $\gen{\mathcal{C}'\cap\mathcal{D}'}=W$ and therefore we also have $\gen{\mathcal{C}'}=W=\gen{\mathcal{D}'}$. Put $M=\overline{\conv(\mathcal{C}')}^{\R^{n}}$ and $N=\overline{\conv(\mathcal{D}')}^{\R^{n}}$. Then also $\gen M=\gen N=\gen{M\cap N}=W$.

By Proposition \ref{convex_0}, parts \ref{1}, \ref{1.5}, and Proposition \ref{convex}\ref{4}, we now obtain,
$$\ri(M\cap N)=\ri(M)\cap \ri(N)=\ri\left(\overline{\conv(\mathcal{C}')}^{\R^{n}}\right)\cap \ri\left(\overline{\conv(\mathcal{D}')}^{\R^{n}}\right)=$$
$$=\ri\big(\conv(\mathcal{C}')\big)\cap \ri\big(\conv(\mathcal{D}')\big)=\ri\big(\conv(\mathcal{C}')\cap \conv(\mathcal{D}')\big)=\ri\big(\conv(\mathcal{C}'\cap \mathcal{D}')\big).$$
Therefore
$$\overline{\conv(\mathcal{C}')}^{\R^{n}}\cap  \overline{\conv(\mathcal{D}')}^{\R^{n}}=M\cap N=\overline{M\cap N}^{\R^{n}}=\overline{\ri(M\cap N)}^{\R^{n}}=$$
$$=	\overline{\ri\big(\conv(\mathcal{C}'\cap \mathcal{D}')\big)}^{\R^{n}}=\overline{\conv(\mathcal{C}'\cap \mathcal{D}')}^{\R^{n}}\ .$$
Finally, we obtain the equality $$\overline{\mathcal{C}\cap W}\ \cap\ \overline{\mathcal{D}\cap W}=\Z^{n}\cap\overline{\conv(\mathcal{C}\cap W)}^{\R^{n}}\cap  \overline{\conv(\mathcal{D}\cap W)}^{\R^{n}}=$$
$$=\Z^{n}\cap\overline{\conv(\mathcal{C}\cap \mathcal{D}\cap V)}^{\R^{n}}=\overline{(\mathcal{C}\cap\mathcal{D})\cap V}\ .$$
\end{proof}

\begin{theorem}\label{change}
Almost prismal monoids are closed under intersections.
\end{theorem}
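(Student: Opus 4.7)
The goal is, for any almost prismal $\mathcal{C}, \mathcal{D} \sub \Z^n$ and any vector subspace $V \sub \R^n$, to show that $\overline{(\mathcal{C}\cap\mathcal{D})\cap V}$ is finitely generated. The natural tool is Proposition \ref{closure_basic}, but that proposition requires $\mathcal{C}, \mathcal{D}$ to be \emph{pure}, which almost prismality does not guarantee. My plan is therefore to first reduce to the pure case by replacing $\mathcal{C}, \mathcal{D}$ with their purifications $\pure{\mathcal{C}}, \pure{\mathcal{D}}$.

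The key technical ingredient is the identity $\overline{\mathcal{E}} = \overline{\pure{\mathcal{E}}}$ for every submonoid $\mathcal{E}$ of $\Z^n$, which follows from $\conv(\mathcal{E}) = \conv(\pure{\mathcal{E}})$: if $k\alpha \in \mathcal{E}$, then $\alpha = \tfrac{1}{k}(k\alpha) + \tfrac{k-1}{k} \cdot 0 \in \conv(\mathcal{E})$ since $0 \in \mathcal{E}$. Combined with the trivial observation $\pure{\mathcal{C} \cap \mathcal{D} \cap V} = \pure{\mathcal{C}} \cap \pure{\mathcal{D}} \cap V$ (using that $V$ is a vector subspace, so $k\alpha \in V$ forces $\alpha \in V$), this gives $\overline{(\mathcal{C}\cap\mathcal{D})\cap V} = \overline{\pure{\mathcal{C}} \cap \pure{\mathcal{D}} \cap V}$, as well as $\overline{\pure{\mathcal{C}} \cap V'} = \overline{\mathcal{C} \cap V'}$ for every subspace $V'$; the latter says that $\pure{\mathcal{C}}$, and symmetrically $\pure{\mathcal{D}}$, is itself almost prismal.

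With $\pure{\mathcal{C}}, \pure{\mathcal{D}}$ now pure and almost prismal, I apply Proposition \ref{closure_basic} to obtain
\begin{equation*}
\overline{\pure{\mathcal{C}} \cap \pure{\mathcal{D}} \cap V} = \overline{\pure{\mathcal{C}} \cap W} \cap \overline{\pure{\mathcal{D}} \cap W}, \qquad W = \gen{\pure{\mathcal{C}} \cap \pure{\mathcal{D}} \cap V}.
\end{equation*}
Each factor on the right is finitely generated by the almost prismality of $\pure{\mathcal{C}}$ (resp.\ $\pure{\mathcal{D}}$), and each is pure (closures are always pure). The remaining step is the standard fact that the intersection of two finitely generated pure submonoids of $\Z^n$ is again finitely generated: such monoids are precisely the sets of integer points of rational polyhedral cones, the intersection of two such cones is again a rational polyhedral cone, and Gordan's lemma yields finite generation of its lattice points. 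I expect the main technical point to be the purity reduction itself---i.e.\ establishing $\overline{\mathcal{E}} = \overline{\pure{\mathcal{E}}}$ and verifying that $\pure{\cdot}$ commutes with intersection with a subspace---since once this is in place, Proposition \ref{closure_basic} together with Gordan's lemma finishes the argument.
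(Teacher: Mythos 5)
Your proposal is correct, and it rests on the same key lemma as the paper's own proof: Proposition \ref{closure_basic}, combined with almost prismality to get finite generation of the two factors $\overline{\mathcal{C}\cap W}$ and $\overline{\mathcal{D}\cap W}$. The difference is one of completeness rather than of route. The paper's proof applies Proposition \ref{closure_basic} directly to $\mathcal{C}$ and $\mathcal{D}$ --- even though that proposition is stated only for \emph{pure} monoids, a hypothesis that almost prismality does not supply --- and then dismisses the finite generation of the resulting intersection with ``the rest is clear.'' Your purification reduction (namely $\overline{\mathcal{E}}=\overline{\pure{\mathcal{E}}}$ via $\conv(\mathcal{E})=\conv(\pure{\mathcal{E}})$, the compatibility of $\pure{\cdot}$ with intersecting by a subspace, and the consequence that $\pure{\mathcal{C}}$, $\pure{\mathcal{D}}$ are again almost prismal) is exactly the bridge needed to invoke Proposition \ref{closure_basic} legitimately in the stated generality, so it is a genuine gap-fill rather than a detour; note that in the paper's own applications of this theorem the intersected monoids happen to be prismal, hence pure, which is presumably why the issue went unremarked. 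Likewise, your closing step --- finitely generated pure submonoids of $\Z^n$ are precisely the lattice points of rational polyhedral cones, so the intersection of two of them is finitely generated by Gordan's lemma --- is the standard justification of the claim the paper treats as immediate (and is of the same kind as the facts the paper cites from Bruns--Gubeladze elsewhere). In short: same approach, but yours is the airtight version of it.
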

\begin{proof}
Let  $V$ be a vector subspace of $\R^n$ and $\mathcal{C}, \mathcal{D}$ be almost prismal monoids. By Proposition \ref{closure_basic}, $\overline{(\mathcal{C}\cap\mathcal{D})\cap W}=\overline{\mathcal{C}\cap W}\cap\overline{\mathcal{D}\cap W}$  is a finitely generated monoid  as $\overline{\mathcal{C}\cap W}$ and $\overline{\mathcal{D}\cap W}$ are finitely generated. The rest is clear.
\end{proof}

\begin{remark}\label{remark 1.5}
 Note that in general it need not be true that $\overline{\mathcal{C}\cap\mathcal{D}}=\overline{\mathcal{C}}\cap\overline{\mathcal{D}}$ (similarly as in the case of a usual topological closure operator). An example is when $m=2$ and $\mathcal{C}=\set{(i, j)\in\N^{2}_{0}}{i<j\ \text{or}\ i=j=0}$ and $\mathcal{D}=\set{(i, j)\in\N^{2}_{0}}{i>j\ \text{or}\ i=j=0}$. Then $\overline{\mathcal{C}\cap\mathcal{D}}=\{(0, 0)\}$, while $\overline{\mathcal{C}}\cap\overline{\mathcal{D}}=\set{(k, k)}{k\in\N_{0}}$.
Note that in this case $W=\gen{\mathcal{C}\cap\mathcal{D}}=\{(0, 0)\}$ is 0-dimensional.
\end{remark}

\begin{proposition}\label{simplification}
Let $\mathcal{C}\sub\Z^{n}$ be a monoid. Then
\begin{enumerate}
 \item\label{first} $\mathcal{C}$ is finitely generated if and only if $\pure{\mathcal{C}}$ is finitely generated.
 \item\label{second} $\mathcal{C}$ is almost prismal if and only if for every vector subspace $V\sub\R^{n}$ defined over $\Q$ the monoid $\overline{\mathcal{C}\cap V}$ is finitely generated.
\end{enumerate}
\begin{proof}
 It follows easily from the fact that a monoid $\mathcal{C}'$ is finitely generated if and only if the cone $\conv(\mathcal{C}')$ is finitely generated (as a cone) (see \cite{bruns}).
\end{proof}

\begin{lemma}\label{integral_basis}
 Let $V\sub\R^{n}$ be a vector subspace defined over $\Q$ and $\nu:V\to \R^{n}$ be a linear embedding such that $\nu(V\cap\Z^{n})\sub\Z^{n}$. Then $\pure{\nu(V\cap\Z^{n})}=\nu(V)\cap\Z^{n}$.
\end{lemma}
\end{proposition}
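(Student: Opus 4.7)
The plan is to prove both inclusions separately. The inclusion $\pure{\nu(V \cap \Z^n)} \subseteq \nu(V) \cap \Z^n$ is immediate from the definitions: if $\alpha \in \Z^n$ and $k\alpha = \nu(\beta)$ for some $k \in \N$ and $\beta \in V \cap \Z^n$, then by linearity of $\nu$ we get $\alpha = \nu(\beta/k) \in \nu(V)$, while $\alpha \in \Z^n$ holds by assumption.

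For the reverse inclusion, I would exploit the hypothesis that $V$ is defined over $\Q$ to pick a basis $\beta_1, \ldots, \beta_d \in V \cap \Z^n$ of $V$. Since $\nu(V \cap \Z^n) \subseteq \Z^n$ by assumption, the images $\nu(\beta_1), \ldots, \nu(\beta_d)$ lie in $\Z^n$, and because $\nu$ is a linear embedding they form a basis of $\nu(V)$ consisting of integer vectors. In particular, $\nu(V)$ is itself defined over $\Q$.

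Now, given any $\alpha \in \nu(V) \cap \Z^n$, I would write $\alpha = \sum_{i=1}^d r_i\, \nu(\beta_i)$ uniquely with $r_i \in \R$. Since $\alpha$ and each $\nu(\beta_i)$ have integer (hence rational) entries, this is a consistent linear system with rational coefficients and a unique solution in the span, so the coordinates $r_i$ must in fact be rational. Choosing a common denominator $k \in \N$ for the $r_i$'s gives $k\alpha = \nu\bigl(\sum_{i=1}^d (kr_i)\beta_i\bigr)$, where $\sum_{i=1}^d (kr_i)\beta_i$ is an integer combination of $\beta_1, \ldots, \beta_d$, hence an element of $V \cap \Z^n$. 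Thus $k\alpha \in \nu(V \cap \Z^n)$, and therefore $\alpha \in \pure{\nu(V \cap \Z^n)}$.

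The argument is essentially just linear algebra over $\Q$, so I do not expect a real obstacle. The only conceptual point is to notice that the hypotheses automatically force $\nu(V)$ to be itself a rational subspace of $\R^n$ (via the image of a $\Z$-basis of $V \cap \Z^n$), which is precisely what makes the clearing-of-denominators step possible.
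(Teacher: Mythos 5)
Your proposal is correct and follows essentially the same route as the paper: choose an integer basis of $V$ (using that $V$ is defined over $\Q$), observe its image under $\nu$ is an integer basis of $\nu(V)$, and clear denominators of the rational coordinates of $\alpha \in \nu(V)\cap\Z^{n}$ to land a multiple of $\alpha$ in $\nu(V\cap\Z^{n})$. The only difference is that you explicitly justify why those coordinates are rational (the linear system has rational data and a unique solution), a point the paper's proof asserts without comment, and you spell out the easy inclusion that the paper dismisses as obvious.
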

\begin{proof}
Since $V$ is defined over $\Q$, there is a basis $\{u_{1},\dots,u_{k}\}\sub\Z^{n}$ of $V$. Then $\{\nu(u_{1}),\dots,\nu(u_{k})\}\sub\Z^{n}$ is a basis of $\nu(V)$.  For  $\alpha\in\nu(V)\cap\Z^{n}$ there are integers $r_{i}\in\Z$, $i=1,\dots,k$, and $s\in\N$ such that $\alpha=\sum^{k}_{i=1}\frac{r_{i}}{s}\ \nu(u_{i})$. Hence $s\alpha=\nu\big(\sum^{k}_{i=1}r_{i}\ u_{i}\big)\in\nu(V\cap\Z^{n})$ and $\alpha\in\pure{\nu(V\cap\Z^{n})}$. The rest is obvious.
\end{proof}

\begin{proposition}\label{alm_prism_equiv}
Let $\mathcal{C}\sub\Z^{n}$ be a monoid and $V\sub\R^{n}$ be a vector subspace. If $\mathcal{C}$ is prismal then the monoid $\mathcal{C}\cap V$ is prismal too.

 Let $\mathcal{C}\sub V$ and let $V$ be defined over $\Q$. If $\nu:V\to \R^{n}$ is a linear embedding such that $\nu(V\cap\Z^n)\sub\Z^n$, then the monoid $\mathcal{C}$ is almost prismal if and only if the monoid $\nu(\mathcal{C})$ is almost prismal.
\end{proposition}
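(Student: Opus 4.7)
For the first claim, purity of $\mathcal{C}\cap V$ is immediate: if $k\alpha\in\mathcal{C}\cap V$ with $k\in\N$, then $\alpha\in\mathcal{C}$ by purity of $\mathcal{C}$ and $\alpha\in V$ since $V$ is a vector subspace. Almost prismality is equally direct: for any vector subspace $W\sub\R^n$ we have $(\mathcal{C}\cap V)\cap W=\mathcal{C}\cap(V\cap W)$ with $V\cap W$ again a vector subspace, so $\overline{(\mathcal{C}\cap V)\cap W}$ is finitely generated by almost prismality of $\mathcal{C}$.

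For the second claim the plan is to transfer the closure operator across the embedding $\nu$. Given a vector subspace $W\sub V$ defined over $\Q$, set $W'=\nu(W)\sub\nu(V)\sub\R^n$; note that $\nu(V)$ is defined over $\Q$ because a $\Z^n$-basis of $V$ is mapped by $\nu$ into $\Z^n$, whence $W'$ is also defined over $\Q$. Since $\nu\colon V\to\nu(V)$ is a linear isomorphism of finite-dimensional spaces and $V$, $\nu(V)$ are closed in $\R^n$, $\nu$ commutes with $\conv$ and with topological closure on subsets of $V$. Combined with the observation that $\overline{\conv(\mathcal{C}\cap W)}^{\R^n}\sub V$, this gives
\[
\nu\bigl(\overline{\mathcal{C}\cap W}\bigr)=\nu(V\cap\Z^n)\cap\overline{\conv(\nu(\mathcal{C})\cap W')}^{\R^n},
\]
whereas directly from the definition, and using that the cone on the right is already contained in $\nu(V)$,
\[
\overline{\nu(\mathcal{C})\cap W'}=\bigl(\nu(V)\cap\Z^n\bigr)\cap\overline{\conv(\nu(\mathcal{C})\cap W')}^{\R^n}.
\]
Lemma \ref{integral_basis} yields $\nu(V)\cap\Z^n=\pure{\nu(V\cap\Z^n)}$, from which I expect to derive the key identity
\[
\pure{\nu(\overline{\mathcal{C}\cap W})}=\overline{\nu(\mathcal{C})\cap W'}.
\]

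With this identity in hand, Proposition \ref{simplification}\ref{first} shows that $\nu(\overline{\mathcal{C}\cap W})$ is finitely generated if and only if $\overline{\nu(\mathcal{C})\cap W'}$ is; and since $\nu$ is an injective monoid homomorphism, $\overline{\mathcal{C}\cap W}$ is finitely generated if and only if $\nu(\overline{\mathcal{C}\cap W})$ is. By Proposition \ref{simplification}\ref{second} it suffices to verify almost prismality on subspaces defined over $\Q$, so I would argue each direction by choosing suitable pairs $(W,W')$: for the forward implication, given $U\sub\R^n$ defined over $\Q$, apply the above to $W'=U\cap\nu(V)$ and $W=\nu^{-1}(W')$; for the converse, given $W\sub\R^n$ defined over $\Q$, apply it to $W_V=W\cap V$ and $U=\nu(W_V)$, using $\mathcal{C}\sub V$ to rewrite $\mathcal{C}\cap W=\mathcal{C}\cap W_V$. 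The main obstacle is the careful bookkeeping behind the displayed key identity: verifying that $\nu$ commutes with $\conv$ and topological closure on subsets of $V$, and then bridging the two integer lattices $\nu(V\cap\Z^n)$ and $\nu(V)\cap\Z^n$ via Lemma \ref{integral_basis}.
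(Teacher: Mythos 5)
Your proposal is correct and follows essentially the same route as the paper's proof: both transfer the closure operator across the embedding $\nu$, bridge the two lattices via Lemma \ref{integral_basis} (i.e., $\nu(V)\cap\Z^n=\pure{\nu(V\cap\Z^n)}$), establish the identity $\pure{\nu\left(\overline{\mathcal{C}\cap W}\right)}=\overline{\nu(\mathcal{C})\cap\nu(W)}$ (the paper writes it in the equivalent form $\overline{\nu(\mathcal{C})\cap W}=\pure{\nu\left(\overline{\mathcal{C}\cap \nu^{-1}(W)}\right)}$), and conclude via Proposition \ref{simplification}\ref{first} together with injectivity of $\nu$. The one step you flagged as still to be derived is immediate because $\overline{\conv(\nu(\mathcal{C})\cap\nu(W))}^{\R^{n}}$ is a cone, so intersecting with it commutes with applying $\pure{\phantom{x}}$ to the lattice $\nu(V\cap\Z^n)$.
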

\begin{proof}
 The first claim is obvious. Let now $W$ be a vector subspace of $\R^{n}$. Set $U=\nu^{-1}(W)\sub V$. Then $\nu(\mathcal{C}\cap U)=\nu(\mathcal{C})\cap\nu(U)=\nu(\mathcal{C})\cap W$. By Lemma \ref{integral_basis}, we know that $\nu(V)\cap\Z^{n}=\pure{\nu(V\cap\Z^{n})}$. Hence we have
 $$\overline{\nu(\mathcal{C})\cap W}=\overline{\nu(\mathcal{C}\cap U)}=\Z^{n}\cap\overline{\conv\big(\nu(\mathcal{C}\cap U)\big)}^{\R^{n}}=$$
 $$=\nu(V)\cap\Z^{n}\cap\overline{\nu\big(\conv(\mathcal{C}\cap U)\big)}^{\R^{n}}=\pure{\nu(V\cap\Z^{n})}\cap\nu\left(\overline{\conv(\mathcal{C}\cap U)}^{\R^{n}}\right)=$$
 $$=\pure{\nu(V\cap\Z^{n})}\cap\pure{\nu\left(\overline{\conv(\mathcal{C}\cap U)}^{\R^{n}}\right)}=$$
 $$=\pure{\nu\left(V\cap\Z^{n}\cap\overline{\conv(\mathcal{C}\cap U)}^{\R^{n}}\right)}=\pure{\nu\left(\overline{\mathcal{C}\cap U}\right)}=\pure{\nu\left(\overline{\mathcal{C}\cap \nu^{-1}(W)}\right)}.$$

 Now, the monoid $\overline{\nu(\mathcal{C})\cap W}$ is finitely generated if and only if $\pure{\nu\left(\overline{\mathcal{C}\cap \nu^{-1}(W)}\right)}$ is so. And this happens if and only if the monoid $\overline{\mathcal{C}\cap \nu^{-1}(W)}$ is finitely generated, by Lemma \ref{simplification}\ref{first}.

Therefore $\nu(\mathcal{C})$ is almost prismal if and only if $\mathcal{C}$ is almost prismal.	
\end{proof}

\begin{theorem}\label{cartesian}
 A cartesian product of prismal monoids is a prismal monoid.
\end{theorem}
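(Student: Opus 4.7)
The plan is to reduce Theorem \ref{cartesian} to a cylinder lemma via Theorem \ref{change}. Purity of $\mathcal{C}\times\mathcal{D}$ is immediate: if $k(\alpha,\beta)\in\mathcal{C}\times\mathcal{D}$ for some $k\in\N$, then $k\alpha\in\mathcal{C}$ and $k\beta\in\mathcal{D}$, so purity of each factor forces $(\alpha,\beta)\in\mathcal{C}\times\mathcal{D}$. For almost prismality I would use the identity
\[
\mathcal{C}\times\mathcal{D}=(\mathcal{C}\times\Z^{m})\cap(\Z^{n}\times\mathcal{D})
\]
inside $\Z^{n+m}$, so that by Theorem \ref{change} it suffices to prove a cylinder lemma: if $\mathcal{C}\subseteq\Z^{n}$ is almost prismal, then $\mathcal{C}\times\Z^{m}\subseteq\Z^{n+m}$ is almost prismal (with the symmetric statement for $\Z^{n}\times\mathcal{D}$).

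For the cylinder lemma, by Proposition \ref{simplification}\ref{second} I fix a subspace $V\subseteq\R^{n+m}$ defined over $\Q$ and aim to show that $\overline{(\mathcal{C}\times\Z^{m})\cap V}$ is finitely generated. Let $L\colon\R^{n+m}\to\R^{n}$ be the first projection, set $V_{0}=V\cap\ker L$ and $W=L(V)$, and pick a $\Q$-complement $V_{1}$ of $V_{0}$ in $V$; then $L|_{V_{1}}\colon V_{1}\to W$ is a $\Q$-linear isomorphism. Decompose the lattice $\Lambda:=V\cap\Z^{n+m}$ compatibly as $\Lambda_{0}\oplus\Lambda_{1}$, with $\Lambda_{0}=V_{0}\cap\Z^{n+m}$ a full lattice in $V_{0}$ and $L(\Lambda_{1})=L(\Lambda)$ a full-rank sublattice of $\Z^{n}\cap W$. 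A direct check shows $(\mathcal{C}\times\Z^{m})\cap V=\Lambda_{0}+M''$, where $M'':=L|_{\Lambda_{1}}^{-1}(\mathcal{C}\cap L(\Lambda))\subseteq\Lambda_{1}$. Because $\mathrm{conv}(\Lambda_{0})=V_{0}$ and convex hulls commute with Minkowski sums, the splitting $V=V_{0}\oplus V_{1}$ yields
\[
\overline{(\mathcal{C}\times\Z^{m})\cap V}=\Lambda_{0}\oplus\bigl(\Lambda_{1}\cap\overline{\mathrm{conv}(M'')}^{V_{1}}\bigr).
\]
Since $\Lambda_{0}$ is a finitely generated abelian group, it remains to verify that the second summand is finitely generated; under the isomorphism $L|_{V_{1}}$ (which identifies $\Lambda_{1}$ with $L(\Lambda)$) this amounts to finite generation of $L(\Lambda)\cap\overline{\mathcal{C}\cap L(\Lambda)}^{\Z^{n}}$.

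The principal obstacle is to extract finite generation from almost prismality of $\mathcal{C}$, which directly yields $\overline{\mathcal{C}\cap W}^{\Z^{n}}$ finitely generated rather than the quantity above. First I would verify $\overline{\mathcal{C}\cap L(\Lambda)}^{\Z^{n}}=\overline{\mathcal{C}\cap W}^{\Z^{n}}$: for any $\beta\in\mathcal{C}\cap W$, the finite index $N=[\Z^{n}\cap W:L(\Lambda)]$ ensures $N\beta\in L(\Lambda)$, and $N\beta\in\mathcal{C}$ by monoid closure, so $\beta=\tfrac{1}{N}(N\beta)+\tfrac{N-1}{N}\cdot 0\in\mathrm{conv}(\mathcal{C}\cap L(\Lambda))$, delivering the nontrivial inclusion. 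Writing $G:=\overline{\mathcal{C}\cap W}^{\Z^{n}}$, the same finite-index trick (combined with the fact that closures are always pure, because $\overline{\mathrm{conv}(M)}$ is a convex cone when $0\in M$) gives $\pure{L(\Lambda)\cap G}=G$. Proposition \ref{simplification}\ref{first} then concludes that $L(\Lambda)\cap G$ is finitely generated, which completes the cylinder lemma and, via the opening reduction, the proof of Theorem \ref{cartesian}.
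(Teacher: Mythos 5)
Your proof is correct, and its outer shell is the same as the paper's: purity is immediate, and the identity $\mathcal{C}\times\mathcal{D}=(\mathcal{C}\times\Z^{m})\cap(\Z^{n}\times\mathcal{D})$ together with Theorem \ref{change} reduces the theorem to the cylinder case. The cylinder lemma itself you prove by a genuinely different argument. The paper adjoins one factor $\Z$ at a time and distinguishes two cases, $\ker(\pi)\sub V$ (a direct computation) and $\ker(\pi)\cap V=0$, where it invokes the transfer result Proposition \ref{alm_prism_equiv} along the embedding $\pi|_{V}$. You instead treat $\mathcal{C}\times\Z^{m}$ in one step via the splitting $V_{0}=V\cap\ker L$, a compatible lattice decomposition $\Lambda=\Lambda_{0}\oplus\Lambda_{1}$, the Minkowski-sum computation of the closure of $\Lambda_{0}+M''$, and a finite-index argument feeding into Proposition \ref{simplification}\ref{first}; this uniform treatment also covers the intermediate cases $0\neq V_{0}\neq\ker L$, which the paper's one-coordinate-at-a-time induction never has to meet. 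Your explicit finite-index bookkeeping is a real merit of this route: the image lattice $L(\Lambda)$ is in general only a finite-index subgroup of $\Z^{n}\cap W$, so equalities like the paper's $\pi(\mathcal{C}')=\pi(V)\cap\mathcal{C}$ are not literally true (for $V=\gen{(2,1)}\sub\R^{2}$ and $\mathcal{C}=\N_{0}$ one gets $\pi(\mathcal{C}')=2\N_{0}\subsetneq\N_{0}=\pi(V)\cap\mathcal{C}$); such discrepancies are harmless only because of exactly the kind of pure-hull argument you carry out explicitly, which in the paper is absorbed into Propositions \ref{simplification} and \ref{alm_prism_equiv}. What the paper's route buys in exchange is brevity, since the transfer lemma hides that computation once and for all. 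One small ordering point in your write-up: choose the lattice splitting $\Lambda=\Lambda_{0}\oplus\Lambda_{1}$ first (it exists because $\Lambda_{0}=\Lambda\cap V_{0}$ is a saturated subgroup, so $\Lambda/\Lambda_{0}$ is free) and then set $V_{1}=\gen{\Lambda_{1}}$; for a $\Q$-complement $V_{1}$ fixed in advance, $\Lambda_{0}\oplus(\Lambda\cap V_{1})$ may be a proper finite-index subgroup of $\Lambda$, in which case $L(\Lambda\cap V_{1})$ need not equal $L(\Lambda)$.
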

\begin{proof}
 Let $\mathcal{C}_i$ be prismal monoids in $\Z^{d_i}$ for $i=1,2$. Then the monoid $\mathcal{C}_1 \times\mathcal{C}_2\sub\Z^{d_1}\times\Z^{d_2}$ can be expressed as $\mathcal{C}_1 \times\mathcal{C}_2=(\mathcal{C}_1\times \Z^{d_2})\cap(\Z^{d_{1}} \times\mathcal{C}_2) $. In the view of Theorem  \ref{change}, it is therefore enough to prove that $\mathcal{C}\times\Z$ is prismal  whenever $\mathcal{C}$ is prismal.  
 
 Let $\mathcal{C}\sub\Z^{n}$ be a prismal monoid and let $\pi:\R^{n}\times\R\to\R^{n}$ be the projection forgetting the last component. Let $V$ be a subspace of $\R^{n}\times\R$. Due to Proposition \ref{simplification}\ref{second}, we may consider that $V$ is defined over $\Q$. 
 
 If $\ker(\pi)=\gen{(0,0,\dots,0,1)}\sub V$, then $V=W\oplus\R$ for some subspace $W$ of $\R^n\times\{0\}$. Therefore we clearly obtain  $\overline{V\cap(\mathcal{C}\times\Z)}=\overline{(W\cap\mathcal{C})\times\Z}=\overline{(W\cap\mathcal{C})}\times\Z$. Since $\mathcal{C}$ is prismal, $\overline{(W\cap\mathcal{C})}$ is a finitely generated monoid and the monoid $\overline{V\cap(\mathcal{C}\times\Z)}$ is finitely generated as well.

Let now  $\ker(\pi)\cap V=0$. Set $\mathcal{C}'=V\cap(\mathcal{C}\times\Z)$. By Proposition \ref{alm_prism_equiv}, the monoid $\mathcal{C}'$ is almost prismal if and only if the monoid $\pi(\mathcal{C}')=\pi(V)\cap\pi(\mathcal{C}\times\Z)=\pi(V)\cap\mathcal{C}$ is almost prismal. From the assumption we know that $\mathcal{C}$ is almost prismal, hence  $\pi(V)\cap\mathcal{C}$ is so and, consequently, $\mathcal{C}'$ is prismal as well. In particular, $\overline{\mathcal{C}'}=\overline{V\cap(\mathcal{C}\times\Z)}$ is finitely generated.

Finally, we have verified the conditions of prismality and the monoid $\mathcal{C}\times\Z$ is therefore prismal.
\end{proof}

\begin{theorem}\label{epi_prismal}
 Let $\pi:\R^{n}\to\R^{k}$ be a linear epimorphism such that $\pi(\Z^{n})\sub\Z^{k}$. If a monoid $\mathcal{C}\sub\Z^{k}$ is almost prismal then the monoid $(\pi_{|\Z^{n}})^{-1}(\mathcal{C})$ is almost prismal as well.
\end{theorem}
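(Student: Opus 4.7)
The goal is to verify that $\mathcal{D}:=(\pi_{|\Z^n})^{-1}(\mathcal{C})=\pi^{-1}(\mathcal{C})\cap\Z^n$ is almost prismal. By Proposition \ref{simplification}\ref{second}, it suffices to fix a subspace $V\sub\R^n$ defined over $\Q$ and show that $\overline{\mathcal{D}\cap V}$ is finitely generated. Set $W:=\pi(V)\sub\R^k$, which is again defined over $\Q$, and note that $\overline{\mathcal{C}\cap W}$ is finitely generated because $\mathcal{C}$ is almost prismal.

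The technical heart of the proof is the identity
$$\overline{\conv(\mathcal{D}\cap V)}^{\R^n}=V\cap\pi^{-1}\!\left(\overline{\conv(\mathcal{C}\cap W)}^{\R^k}\right),$$
whose inclusion $\sub$ is immediate by continuity and linearity of $\pi$. For the reverse I would exploit two structural facts. First, $V_0:=V\cap\ker\pi$ is a rational subspace; since $0\in\mathcal{C}$ we have $V_0\cap\Z^n\sub\mathcal{D}\cap V$, and as $V_0\cap\Z^n$ has full rank in $V_0$, $\conv(V_0\cap\Z^n)=V_0$. A short check using this shows that the set $\overline{\conv(\mathcal{D}\cap V)}^{\R^n}$ is invariant under translation by all of $V_0$. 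Second, $\pi(V\cap\Z^n)$ is a sublattice of finite index $I$ in $W\cap\Z^k$, so the submonoid $\mathcal{C}'':=\mathcal{C}\cap\pi(V\cap\Z^n)$ satisfies $\conv(\mathcal{C}'')=\conv(\mathcal{C}\cap W)$: every $c\in\mathcal{C}\cap W$ has $Ic\in\mathcal{C}''$ and hence $c=\tfrac{1}{I}(Ic)+(1-\tfrac{1}{I})\cdot 0\in\conv(\mathcal{C}'')$. Given $x\in V$ with $\pi(x)\in\overline{\conv(\mathcal{C}\cap W)}^{\R^k}=\overline{\conv(\mathcal{C}'')}^{\R^k}$, I would approximate $\pi(x)$ by a sequence $y_m\in\conv(\mathcal{C}'')$ and lift each summand of the convex combination producing $y_m$ to an integer preimage in $\mathcal{D}\cap V$, obtaining $x_m\in\conv(\mathcal{D}\cap V)$ with $\pi(x_m)=y_m$. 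Fixing a rational decomposition $V=V_0\oplus V_1$ with $\pi_{|V_1}$ a linear isomorphism onto $W$, the $V_1$-component of $x_m-x$ vanishes in the limit, while the $V_0$-component is absorbed by the $V_0$-invariance; therefore $x\in\overline{\conv(\mathcal{D}\cap V)}^{\R^n}$.

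Passing to integer points and using $\pi(\Z^n)\sub\Z^k$, the identity yields
$$\overline{\mathcal{D}\cap V}=\bigl\{\,x\in V\cap\Z^n\;:\;\pi(x)\in\overline{\mathcal{C}\cap W}\,\bigr\}.$$
The monoid $\overline{\mathcal{C}\cap W}$ is pure and finitely generated, so $\overline{\conv(\mathcal{C}\cap W)}^{\R^k}$ is a rational polyhedral cone in $\R^k$. Its $\pi$-preimage intersected with the rational subspace $V$ is therefore a rational polyhedral cone in $V$ with respect to the full-rank lattice $V\cap\Z^n$, and Gordan's lemma (cf.\ \cite{bruns}) then yields that $\overline{\mathcal{D}\cap V}$ is finitely generated.

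The step I expect to be the main obstacle is the $\supseteq$ inclusion of the key geometric identity, precisely because $\pi(V\cap\Z^n)$ is only a finite-index sublattice of $W\cap\Z^k$ rather than all of it. The remedy is to pass to the auxiliary monoid $\mathcal{C}''$, whose elements admit honest integer preimages in $V\cap\Z^n$, and to use the $V_0$-absorption via the translation invariance to align the lift with the target $x$ in the closure.
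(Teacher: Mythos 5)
Your proposal is correct, and it takes a genuinely different route from the paper's. The paper proves the theorem by factoring $\pi$ as $\psi_1\circ\nu\circ\widetilde{\pi}\circ\psi_2$, with $\psi_1,\psi_2$ lattice automorphisms, $\widetilde{\pi}$ a canonical coordinate projection, and $\nu$ a diagonal scaling (in effect, Smith normal form), and then treats each factor with machinery already established: for the canonical projection the preimage is $\mathcal{C}\times\Z^{n-k}$, handled by Theorem \ref{cartesian}, while the scaling is handled through purity via Proposition \ref{simplification}\ref{first} and Lemma \ref{integral_basis}. You instead work directly with a fixed subspace $V$ defined over $\Q$: writing $\mathcal{D}=(\pi_{|\Z^{n}})^{-1}(\mathcal{C})$ and $W=\pi(V)$, your cone identity $\overline{\conv(\mathcal{D}\cap V)}^{\R^{n}}=V\cap\pi^{-1}\big(\overline{\conv(\mathcal{C}\cap W)}^{\R^{k}}\big)$ --- proved via the finite-index trick $I\cdot(W\cap\Z^{k})\sub\pi(V\cap\Z^{n})$ combined with translation invariance along $V\cap\ker\pi$ --- reduces the claim to Gordan's lemma for the pullback of a rational polyhedral cone. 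Both arguments are sound, and the trade-offs are real: the paper's proof stays entirely within its own toolkit (Theorems \ref{change} and \ref{cartesian}) and uses no polyhedral geometry beyond the equivalence of finite generation for monoids and their convex-hull cones, but its canonical-projection step invokes Theorem \ref{cartesian} and hence needs $\mathcal{C}$ to be prismal (i.e., also pure), so strictly speaking it derives the conclusion from prismality rather than from the stated ``almost prismal'' hypothesis. Your argument works verbatim under the hypothesis as stated, needs neither Theorem \ref{change} nor Theorem \ref{cartesian}, and yields as a bonus the explicit description $\overline{\mathcal{D}\cap V}=\set{x\in V\cap\Z^{n}}{\pi(x)\in\overline{\mathcal{C}\cap W}}$ as the lattice points of a rational polyhedral cone --- at the cost of importing Gordan's lemma and standard facts on rational polyhedral cones from \cite{bruns}. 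The only points you leave implicit are routine: that $\overline{\conv(\mathcal{C}\cap W)}^{\R^{k}}=\conv\big(\overline{\mathcal{C}\cap W}\big)$ (sandwich one inside the other, using that a finitely generated rational cone is closed), and that preimages and intersections of rational polyhedral cones are again rational polyhedral.
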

\begin{proof}
 First, assume that $\pi$ is a canonical projection of the form $\pi(e_i)=e_i$ for $i=1,\dots,k$ and $\pi(e_j)=0$ for $j=k+1,\dots,n$ (where $e_i$ are the standard basis vectors). Then $(\pi_{|\Z^{n}})^{-1}(\mathcal{C})=\mathcal{C}\times\Z^{n-k}$. By Theorem \ref{cartesian}, this monoid is prismal, provided that $\mathcal{C}$ is prismal.
 
 Further, let $\pi$ be a scaling such that  $n=k$ and $\pi(e_i)=k_i\cdot e_i$ for some $0\neq k_i\in\Z$, $i=1,\dots,n$. Clearly, $\pure{\pi(\Z^{n})}=\Z^{n}$. Set $\widetilde{\mathcal{C}}=(\pi_{|\Z^{n}})^{-1}(\mathcal{C})$. Using Proposition \ref{simplification}\ref{first} and Lemma \ref{integral_basis} we obtain that the monoid  $\widetilde{\mathcal{C}}$ is almost prismal if and only if $\pi(\widetilde{\mathcal{C}})$  is almost prismal. 
  This is equivalent to $\pure{\pi(\widetilde{\mathcal{C}})}=\pure{\pi\big((\pi_{|\Z^{n}})^{-1}(\mathcal{C})\big)}=\pure{\mathcal{C}}$ being almost prismal, and that happens if and only if $\mathcal{C}$ is almost prismal. Since $\mathcal{C}$ is almost prismal, we have proved that the monoid $\widetilde{\mathcal{C}}=(\pi_{|\Z^{n}})^{-1}(\mathcal{C})$ is almost prismal too.
 
 Finally, in the general case we can consider that $\pi=\psi_1\circ\nu\circ\widetilde{\pi}\circ\psi_2$, where $\widetilde{\pi}$ is a canonical projection, $\nu$ a scaling and $\psi_1(\psi_2$, resp.) corresponds to an isomorphism of the group $\Z^{k}$ ($\Z^{n}$, resp.). Now, combining all the cases together we obtain that from prismality of $\mathcal{C}$  it follows that $(\pi_{|\Z^{n}})^{-1}(\mathcal{C})$ is almost prismal, too.
\end{proof}

Now we will be interested in establishing a decomposition of a prismal monoid into faces:
Let $K\sub \R^{n}$ be a convex set. A non-empty subset $A\sub K$ will be called a \emph{relatively open face} of $K$ if 
\begin{itemize}
 \item $A$ is convex,
 \item $\ri(A)=A$ and
 \item for every line segment $L\sub K$ such that $\ri(L)\cap A\neq \emptyset$, we have $\ri(L)\sub A$.
\end{itemize}

\begin{theorem}\label{cone_decomp}
For every cone $K$  in $\R^n$ there is a (unique) decomposition $\mathbf{K}=\set{A_i}{i\in I}$ of $K$ into disjoint union of relatively open faces $A_i$ of $K$, i.e., $K=\bigsqcup_{i\in I} A_i$.

Moreover, let $A\in\mathbf{K}$ and let $x\in A$ and $y\in K\setminus\overline{A}^{\R^n}$. Then  there is a relatively open face $B\in \mathbf{K}$ such that the relatively interior of the line segment $\conv(\{x,y\})$ lies in $B$ and $\dim(B)>\dim(A)$.
\end{theorem}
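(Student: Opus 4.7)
The plan is to obtain $\mathbf{K}$ from the equivalence relation $\sim$ on $K$ defined by $x\sim y$ iff there exist $u,v\in K$ such that both $x$ and $y$ lie in $\ri(\conv(\{u,v\}))$ (where $u=v$ is allowed, ensuring reflexivity). The equivalence classes of $\sim$ will be exactly the relatively open faces $A_i$, and the ``moreover'' clause will then follow from a geometric argument placing $A$ on the closure of the relative interior of a suitable triangle inside $K$.

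First I would verify that $\sim$ is an equivalence relation: reflexivity and symmetry are immediate, and transitivity is the standard convexity argument of splicing two segments of $K$ that share a common relative interior point into a single segment whose relative interior contains all the relevant points. Each class $A$ is then a relatively open face of $K$: the face axiom is immediate (if a segment $L\sub K$ has $p\in\ri(L)\cap A$, then every $q\in\ri(L)$ is $\sim$-equivalent to $p$ via $L$, hence in $A$), and convexity together with $\ri(A)=A$ are routine consequences of the same splicing. Uniqueness of $\mathbf{K}$ is straightforward: in any candidate decomposition each relatively open face is contained in a single $\sim$-class, because by relative openness of the face in its affine hull the segment joining any two of its points can be extended slightly on both sides while staying inside $K$.

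For the ``moreover'' clause, fix $z\in\ri(\conv(\{x,y\}))$ and let $B$ be its class. Any two points of $\ri(\conv(\{x,y\}))$ lie in the relative interior of $\conv(\{x,y\})\sub K$, so $\ri(\conv(\{x,y\}))\sub B$. The key step is to show $A\sub\overline{B}^{\R^n}$: take any $w\in A$. The equivalence $w\sim x$ inside $A$ yields some $v\in K$ with $x=(1-s)w+sv$, $s\in(0,1)$, and substituting into $z=(1-r)x+ry$, $r\in(0,1)$, gives
\[z=(1-r)(1-s)w+(1-r)sv+ry,\]
a convex combination with all three coefficients strictly positive. Hence $z\in\ri(T)$ for the set $T=\conv(\{w,v,y\})\sub K$. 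By the same segment argument used for $\ri(\conv(\{x,y\}))$, every pair of points of the relatively open convex set $\ri(T)$ lies in the relative interior of a slightly larger segment inside $T\sub K$, so $\ri(T)\sub B$. Therefore $w\in\overline{T}^{\R^n}=\overline{\ri(T)}^{\R^n}\sub\overline{B}^{\R^n}$, and $A\sub\overline{B}^{\R^n}$.

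From this inclusion I obtain $\dim(A)\le\dim(B)$. If equality held, $A$ and $B$ would share a common affine hull, and then the relative openness of $A$ together with the standard identity $\ri(\overline{B}^{\R^n})=B$ for the convex set $B$ would force $A\sub B$, hence $A=B$; but then $\ri(\conv(\{x,y\}))\sub A$ would yield $y\in\overline{A}^{\R^n}$, contradicting the hypothesis $y\in K\setminus\overline{A}^{\R^n}$. I expect the main obstacle to be the careful verification that every $\sim$-class satisfies all three clauses of the definition of a relatively open face (particularly convexity and $A=\ri(A)$); the genuine novelty of the ``moreover'' clause is then compressed into the choice of the triangle $T$, which simultaneously witnesses $A\sub\overline{B}^{\R^n}$ and, via the dimension argument above, the strict inequality $\dim(B)>\dim(A)$.
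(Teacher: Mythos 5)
Your proposal is correct and, by the very uniqueness it establishes, produces the same partition as the paper, but through a genuinely different construction. The paper's proof defines the equivalence via maximal subspaces: $x\sim y$ iff $W_x=W_y$, where $W_x$ is the unique vector subspace of maximal dimension with $x\in\ri(W_x\cap K)$, so that the faces come with the explicit description $A=\ri(W\cap K)$. You instead use the classical common-open-segment relation ($x\sim y$ iff both lie in $\ri(\conv(\{u,v\}))$ for some $u,v\in K$), which is the standard face-partition relation for arbitrary convex sets (Rockafellar, \S 18). Your argument never uses that $K$ is a cone, so it proves a more general statement; moreover, you give an actual self-contained proof of the ``moreover'' clause via the triangle $T=\conv(\{w,v,y\})$ (the point $z$ being a strictly positive convex combination of $w,v,y$ lies in $\ri(T)$, whence $\ri(T)\sub B$ and $A\sub\overline{B}^{\R^n}$, and the dimension comparison then forces $\dim(B)>\dim(A)$) --- a part the paper asserts without any argument. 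What the paper's construction buys in exchange is precisely the form $A=\ri(W\cap K)$ with $W$ a subspace defined over $\Q$, which is what the proof of Corollary \ref{monoid_decomp} uses directly; with your construction this description of the faces would still have to be derived before that corollary can be proved as written.

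Two small patches, both at the same level of terseness as the paper's own ``it is easy to verify'': (1) your uniqueness argument shows only that any candidate decomposition refines the $\sim$-partition; for equality you also need the converse refinement, which is exactly your face-axiom argument applied to the candidate faces (if $x\sim y$ via a segment $L$ and $x\in F$, then $\ri(L)\cap F\neq\emptyset$ forces $\ri(L)\sub F$, hence $y\in F$). (2) In the key step, treat $w=x$ separately (then trivially $w\in\overline{B}^{\R^n}$), and note that for $w\neq x$ one must pick the endpoint $v$ of the common segment lying on the far side of $x$ from $w$. Finally, be aware that the clause $\ri(A)=A$ for the $\sim$-classes, which you label routine, is where the genuine convexity machinery (supporting hyperplanes, as in Rockafellar's Theorem 18.2) enters; it is classical, but it is the least trivial of the three properties you must check.
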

\begin{proof}
It is easy to verify that the following construction provides the desired decomposition.
For every $x\in K$ there is a unique vector space $W_{x}$ of maximal dimension such that $x$ is a relatively inner point of the convex set $W_{x}\cap K$. Now, set a relation on $K$ as follows $x\sim y$  if and only if $W_x=W_y$. This relation is an equivalence and the partition sets are the desired relatively open faces of $K$. In particular, such a face $A$ is of the form $A=\ri(W_x \cap K)$, where $x\in A$.
\end{proof}

Finally, we are ready to prove the following result, which is the culmination of this section. The corollary establishes geometrical properties of prismal monoids that will play a key role later in the proof of Theorem \ref{nilpotent}. 
Its proof will go by downwards induction on the dimension of the face $\mathcal D$, and so we will need to be able to relate the properties of lower-dimensional faces to the higher dimensional ones, as in part (iii) of the  following corollary.

\begin{corollary}\label{monoid_decomp}
  Let $\mathcal{C}\sub\R^n$ be a prismal monoid. Let $K=\conv(\mathcal{C})$ and let $\mathbf{K}$ be the unique decomposition of $K$ into relatively open faces. For a relatively open face  $A\in\mathbf{K}$ of $K$ let $A^0=A\cup\{0\}$ be the cone arising from $A$.
  
Set $\mathbf{D}(\mathcal{C}):=\set{A^{0}\cap\mathcal{C}}{A\in\mathbf{K}}$. Then $\mathbf{D}(\mathcal{C})$ is a decomposition of $\mathcal{C}$ into pure monoids  (i.e., $\mathbf{D}(\mathcal{C})=\bigcup_{\mathcal D\in\mathbf{D}(\mathcal{C})} \mathcal D$ and the union is ``almost disjoint": $\mathcal D\cap\mathcal D'=\{0\}$ for $\mathcal D\neq \mathcal D'$) and for each $\mathcal{D}\in\mathbf{D}(\mathcal{C})$ we have:
\begin{enumerate}
 \item\label{i} The monoid $\overline{\mathcal{D}}$ is finitely generated.
 \item\label{ii}  If $\dim(\mathcal{D})=\dim(\mathcal{C})$ then $\overline{\mathcal{D}}=\overline{\mathcal{C}}$.
 \item\label{iii}   For all $0\neq\alpha\in \mathcal{D}$ and $\beta\in \mathcal{C}\setminus \overline{\mathcal{D}}$ there is  $\mathcal{E}\in\mathbf{D}(\mathcal{C})$ such that $\dim(\mathcal{E})>\dim(\mathcal{D})$  and $\alpha+\beta\in \mathcal{E}$. 
 \item\label{iv}  For all $0\neq\alpha\in \mathcal{D}$ and $\gamma\in \overline{\mathcal{D}}$ we have $\alpha+\gamma\in \mathcal{D}$.
\end{enumerate} 
\end{corollary}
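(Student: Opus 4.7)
My plan is to carry out the four claims in order, leveraging the face decomposition $\mathbf{K}$ of $K=\conv(\mathcal{C})$ from Theorem \ref{cone_decomp}. For each face $A\in\mathbf{K}$, setting $W=\gen{A}$, the construction in Theorem \ref{cone_decomp} yields $A=\ri(W\cap K)$. Since $A$ is the relative interior of the convex cone $W\cap K$, it is stable under positive scaling, making $A^{0}=A\cup\{0\}$ a convex cone; hence $\mathcal{D}=A^{0}\cap\mathcal{C}$ is a submonoid of $\mathcal{C}$, and it is pure because $\mathcal{C}$ is pure and $A^{0}$ is stable under positive scaling. The almost-disjoint decomposition property is immediate from disjointness of the faces and $\bigcup_{A\in\mathbf{K}}A^{0}\supseteq K\supseteq\mathcal{C}$.

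The technical heart is the identity $\overline{\mathcal{D}}=\overline{\mathcal{C}\cap W}$ whenever $\mathcal{D}\neq\{0\}$ (the case $\mathcal{D}=\{0\}$ makes (i)--(iv) either trivial or vacuous). The inclusion $\overline{\mathcal{D}}\sub\overline{\mathcal{C}\cap W}$ is immediate. For the reverse, I will fix $\alpha_{0}\in\mathcal{D}\setminus\{0\}\sub A\cap\mathcal{C}$ and pick $\beta\in\mathcal{C}\cap W\sub W\cap K=\overline{A}$. The standard relative-interior property applied in $W\cap K$ gives $\tfrac{1}{n+1}\alpha_{0}+\tfrac{n}{n+1}\beta\in\ri(W\cap K)=A$ for every $n\in\N$; by positive scaling $\alpha_{0}+n\beta\in A\cap\Z^{n}\sub\mathcal{C}$ (using Proposition~\ref{convex}\ref{5} and purity), so $\alpha_{0}+n\beta\in\mathcal{D}$. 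The convex combinations $(1/n)(\alpha_{0}+n\beta)+(1-1/n)\cdot 0=(1/n)\alpha_{0}+\beta\in\conv(\mathcal{D})$ converge to $\beta$ as $n\to\infty$, so $\beta\in\overline{\mathcal{D}}$. Almost prismality of $\mathcal{C}$ then yields (i). For (ii), the hypothesis $\dim(\mathcal{D})=\dim(\mathcal{C})$ combined with $\gen{\mathcal{D}}\sub W\sub\gen{\mathcal{C}}$ forces $W=\gen{\mathcal{C}}$, hence $\mathcal{C}\cap W=\mathcal{C}$.

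For (iv), the same relative-interior trick applied to $\alpha\in\mathcal{D}\setminus\{0\}$ and $\gamma\in\overline{\mathcal{D}}\sub\Z^{n}\cap\overline{A}$ gives $\tfrac{1}{2}(\alpha+\gamma)\in A$, so $\alpha+\gamma\in A\cap\Z^{n}\sub\mathcal{C}$ by purity, hence in $\mathcal{D}$. For (iii), the inclusion $\mathcal{C}\cap\overline{A}\sub\mathcal{C}\cap W\sub\overline{\mathcal{D}}$ (using $\overline{A}\sub W$ and the key identity) shows that $\beta\in\mathcal{C}\setminus\overline{\mathcal{D}}$ forces $\beta\notin\overline{A}$, hence $\beta\notin W$; so Theorem~\ref{cone_decomp}'s second part applied to $\alpha\in A$ and $\beta\in K\setminus\overline{A}^{\R^{n}}$ produces $B\in\mathbf{K}$ with $\tfrac{1}{2}(\alpha+\beta)\in B$ and $\dim(B)>\dim(A)$, giving $\alpha+\beta\in B\cap\mathcal{C}\sub B^{0}\cap\mathcal{C}=:\mathcal{E}$. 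To strengthen this to $\dim(\mathcal{E})>\dim(\mathcal{D})$, I will verify the face-sum inclusion $A+A'\sub B$ (where $A'$ is the face of $\beta$, so $B=A\vee A'$ is the join in the face lattice): by extremeness of faces, if some $\alpha''+\beta''$ with $\alpha''\in A$, $\beta''\in A'$ were to lie in a proper closed sub-face of $\overline{B}$, that sub-face would contain both $\overline{A}$ and $\overline{A'}$, contradicting minimality of $\overline{B}$. This yields $\mathcal{D}+(\alpha+\beta)\sub\mathcal{E}$, so $\gen{\mathcal{E}}\supseteq\gen{\mathcal{D}}+\gen{\alpha+\beta}$; since $\beta\notin W$ implies $\alpha+\beta\notin W\supseteq\gen{\mathcal{D}}$, the dimension strictly increases. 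The principal obstacle I anticipate is establishing the key identity $\overline{\mathcal{D}}=\overline{\mathcal{C}\cap W}$ (particularly the delicate approximation step that combines rel-interior stability, positive scaling in the cone, and purity), with the secondary care point being the face-sum inclusion $A+A'\sub B$ underlying (iii).
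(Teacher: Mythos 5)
Your construction of the decomposition, your proof of the key identity $\overline{\mathcal{D}}=\overline{\mathcal{C}\cap W}$ for $\mathcal{D}\neq\{0\}$ (line-segment principle applied to $\alpha_{0}\in A\cap\Z^{n}$ and $\beta\in\mathcal{C}\cap W$, then scaling and purity), and your arguments for (i), (ii) and (iv) are correct. This part is a legitimate alternative to the paper's route: the paper instead realizes $W$ and $A^{0}$ as convex hulls of pure monoids and applies Proposition~\ref{convex}\ref{4} together with Proposition~\ref{convex_0}, which yields the stronger identity $\overline{\conv(A^{0}\cap\mathcal{C})}^{\R^{n}}=\overline{A}^{\R^{n}}=\overline{\conv(W\cap\mathcal{C})}^{\R^{n}}$ for \emph{every} face, with no nontriviality hypothesis; your approximation argument is more elementary but only gives an equality of lattice-point sets, a difference that matters below.

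The genuine gap is in (iii), exactly at the step you flagged: the face-sum inclusion $A+A'\sub B$. What you offer there is not a proof. It quietly assumes that the face lattice of $K$ has joins (a minimal face $B$ with $\overline{B}\supseteq\overline{A}\cup\overline{A'}$ exists); that $\overline{B}\cap K$ is a union of faces of the decomposition, so that the only alternative to $\alpha''+\beta''\in B$ is ``a proper closed sub-face of $\overline{B}$''; and that a face whose closure contains a point must contain the closure of that point's relatively open face. None of these is provided by Theorem~\ref{cone_decomp}, which gives only the partition and the dimension-increase statement, and none is automatic here, because $K=\conv(\mathcal{C})$ need not be closed: establishing them amounts to redeveloping the theory of faces of general (non-closed) convex sets. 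In addition, your implication ``$\beta\notin\overline{A}$, hence $\beta\notin W$'' runs the wrong way, since $\overline{A}\sub W$; both facts do follow from your inclusion $\mathcal{C}\cap W\sub\overline{\mathcal{D}}$, but directly, not one from the other.

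The detour is also unnecessary. All that (iii) needs is that nonzero faces are spanned by their lattice points, i.e.\ $\dim(\mathcal{D})=\dim(A)$ and $\dim(\mathcal{E})=\dim(B)$; then Theorem~\ref{cone_decomp} gives $\dim(\mathcal{E})=\dim(B)>\dim(A)=\dim(\mathcal{D})$ at once. You can get this from ingredients you already have: given $x\in A$, write $x=\sum_{i}t_{i}c_{i}$ with $c_{i}\in\mathcal{C}$, $t_{i}>0$, $\sum_{i}t_{i}=1$ (possible since $K=\conv(\mathcal{C})$); putting $y=\tfrac{1}{1-t_{1}}\sum_{i\geq 2}t_{i}c_{i}\in K$, the point $x$ lies in the relative interior of the segment $\conv(\{c_{1},y\})\sub K$, so the defining property of relatively open faces forces $c_{1}\in\overline{A}$, and likewise every $c_{i}\in\overline{A}\sub W$. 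Hence $A\sub\conv(\mathcal{C}\cap W)\sub\overline{\conv(\mathcal{D})}$, the last inclusion being your own approximation step, so $\dim(A)\leq\dim(\mathcal{D})$. Applying the same to $B$ (note $\alpha+\beta\in\mathcal{E}$, so $\mathcal{E}\neq\{0\}$ and your key identity applies to it) closes the gap. This is in effect what the paper does: since its preliminary identity is proved at the level of closed convex hulls, the equalities $\dim(\mathcal{D})=\dim(A)$ and $\dim(\mathcal{E})=\dim(B)$ are immediate there, and (iii) then follows from Theorem~\ref{cone_decomp} and purity alone.
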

\begin{proof}
 By the definition of the decomposition, we have $\mathcal{D}=A^{0}\cap\mathcal{C}$, where $A=\ri(W\cap K)$ for some vector subspace $W\sub\R^{n}$ defined over $\Q$. Clearly, $\mathcal{D}$ is a pure monoid.
 
 Further, we show that $\overline{\mathcal{D}}=\overline{W\cap\mathcal{C}}=\Z^{n}\cap\overline{A}^{\R^{n}}$. By the definition, we have 
 $$\overline{\mathcal{D}}=\Z^{n}\cap\overline{\conv(A^0\cap\mathcal{C})}^{\R^n}$$ and $$\overline{W\cap\mathcal{C}}=\Z^{n}\cap\overline{\conv(W\cap\mathcal{C})}^{\R^n}.$$
 Since $W\sub\R^{n}$ is defined over $\Q$, there is a pure monoid $\mathcal{F}\sub\Z^{n}$ such that $W=\conv(\mathcal{F})$. Hence, by Proposition \ref{convex}\ref{4}, we have $$A=\ri(W\cap K)=\ri\big(\conv(\mathcal{F})\cap \conv(\mathcal{C})\big)=\ri\big(\conv(\mathcal{F}\cap\mathcal{C})\big)\ .$$  Therefore there is a pure monoid $\mathcal{F}'\sub\Z^{n}$ such that $A^0=A\cup\{0\}=\conv(\mathcal{F}')$. Now, by Proposition  \ref{convex_0}\ref{1} and Proposition \ref{convex}\ref{4} again, we obtain
 $$\overline{\conv(A^0\cap\mathcal{C})}^{\R^n}=\overline{A^0\cap\conv(\mathcal{C})}^{\R^n}=\overline{A}^{\R^n}=$$
 $$=\overline{\ri(W\cap K)}^{\R^n}=\overline{W\cap K}^{\R^n}=\overline{\conv(W\cap\mathcal{C})}^{\R^n}.$$
 It follows that $\overline{\mathcal{D}}=\overline{W\cap\mathcal{C}}=\Z^{n}\cap\overline{A}^{\R^{n}}$.

 Now we can prove the claims of the statement.
 
 (i) Since $\mathcal{C}$ is prismal, $\overline{\mathcal{D}}=\overline{W\cap\mathcal{C}}$ is a finitely generated monoid.
 
 (ii) If $\dim(\mathcal{D})=\dim(K)$ then $\mathcal{D}=A^0\cap\mathcal{C}$, where $A=\ri(K)$ and $W=\gen{\mathcal{C}}$. By the preliminary part of the proof and by Proposition \ref{convex_0}\ref{1}, we have $\overline{\mathcal{D}}=\Z^{n}\cap\overline{A}^{\R^{n}}=\Z^{n}\cap\overline{K}^{\R^n}=\overline{\mathcal{C}}$.

 (iii) Let $0\neq\alpha\in \mathcal{D}$ and $\beta\in \mathcal{C}\setminus \overline{\mathcal{D}}$. Since $\overline{\mathcal{D}}=\Z^{n}\cap\overline{A}^{\R^{n}}$, we have $\beta\in K\setminus\overline{A}^{\R^n}$. The rest follows immediately from Theorem \ref{cone_decomp} and from the fact that $\mathcal{C}$ is pure.
 
 (iv) First note that, by Proposition \ref{convex}\ref{5}, we have $$\mathcal{D}= \mathcal{C}\cap A^0=\Z^{n}\cap\conv(\mathcal{C})\cap A^0 =\Z^{n}\cap A^0\ .$$ Now,  let $0\neq\alpha\in \mathcal{D}$ and $\gamma\in \overline{\mathcal{D}}=\Z^{n}\cap \overline{A}^{\R^n}$. Then $\alpha\in \ri(A)=A$ is an inner point of a convex set $A$ and $\gamma\in \overline{A}^{\R^n}$.  Since  $A^0$ is a cone, we therefore have $\alpha+\gamma\in A^0\cap\Z^{n}=\mathcal{D}$.
\end{proof}

\section{Every monoid associated to a finite tuple of semiring-generators of a parasemifield is prismal}\label{section 2}

Let now $S$ be a parasemifield. We use the canonical pre-order $\leq_{S}$ defined as $a\leq_{S} b$ if and only if $a=b$ or there exists $c\in S$ such that $a+c=b$. It is in fact an order (see e.g., \cite[Section 2]{vechtomov}). Note that it is preserved by addition, multiplication and   anti-preserved by inversion in $S$ (i.e., $a\leq_{S} b$ implies $a^{-1}\geq_{S} b^{-1}$ for all $a,b\in S$).

Let $A$ be the \emph{prime} subparasemifield of $S$, i.e., the smallest (possibly trivial) parasemifield contained in $S$. There are only two possibilities for $A$: either it is isomorphic to $\Q^+$, or it is trivial (i.e., it consists of a single element).

Let us now introduce the set $Q_S$ of all elements that are smaller than some element of $A$.
As   was already noticed by \cite{jezek,notes}, using $Q_S$ one can define a cone $\mathcal{C}_X(S)$ which plays a key role in the proofs. 

The set $$Q_{S}:=\{a\in S|(\exists q\in A)\ a\leq_{S} q\}$$ is a subsemiring of $S$. Clearly, $a,b\in Q_{S}$ for every $a,b\in S$ such that $a+b\in Q_{S}$.

We say that an $n$-tuple  $X=(x_1, \dots, x_n)$, where $x_i\in S$ for $i=1,\dots,n$, is \emph{a generating tuple of $S$ (considered as a semiring)} if $$S=\set{f(x_1,\dots,x_n)}{0\neq f\in\N[T_1,\dots,T_n]}$$ where $\N[T_1,\dots,T_n]$ is the semiring of polynomials over variables $T_1,\dots,T_n$ with non-negative integer coefficients. 

Let $X$ be such a generating tuple. For $\alpha=(a_{1},\dots,a_{n})\in\N_{0}^{n}$ we put $$\x^{\alpha}:=x_{1}^{a_{1}}\dots x_{n}^{a_{n}} $$ 
and we denote 
$$\mathcal{C}_X(S):=\set{\alpha\in\N_{0}^{n}}{\x^{\alpha}\in Q_{S}}$$
the \emph{corresponding monoid} assigned to $X$ and $S$.

 Obviously, $\mathcal{C}_X(S)$ is a submonoid of $(\N_{0}^n, +)$ and the semiring $Q_{S}$ is generated by the set $\set{\x^{\alpha}}{\alpha\in\mathcal{C}_X(S)}$.

 The goal of this section is to study the monoid $\mathcal{C}_X(S)$ and to show its prismality. The following two results establish some basic geometrical information about $\mathcal{C}_X(S)$. They were already used in \cite{jezek,notes}, but we include their short proofs for the sake of completeness.

\begin{proposition}[\cite{notes}, Lemma 4.6]\label{pure_preliminary}
If $a \in S$ and $n\in\N$ are such that $a^n \in Q_{S}$  then $a\in Q_{S}$.
\end{proposition}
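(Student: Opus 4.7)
The plan is to produce an explicit $q' \in A$ with $a \leq_S q'$, which by the definition of $Q_S$ will give $a \in Q_S$. The hypothesis $a^n \in Q_S$ means $a^n \leq_S q$ for some $q \in A$, so there exists $d \in S$ with $a^n + d = q$; the degenerate sub-case $a^n = q$ can be folded in by replacing $q$ with $q + 1 \in A$ and taking $d = 1$.

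The key trick I would try is to introduce the auxiliary element $b := 1 + a + a^2 + \cdots + a^{n-1} \in S$ and exploit the telescoping identity
\[
b + a^n \;=\; 1 + ba,
\]
which holds because both sides equal $1 + a + a^2 + \cdots + a^n$. Adding $d$ to both sides and substituting $a^n + d = q$ gives $b + q = ba + (1+d)$. Multiplying through by $b^{-1}$, which is available since $S$ is a parasemifield, one obtains
\[
a + (1+d) b^{-1} \;=\; 1 + q b^{-1},
\]
which is precisely the assertion $a \leq_S 1 + q b^{-1}$.

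It then remains to replace $1 + q b^{-1}$ by an element of $A$. Here the design of $b$ pays off: since $b = 1 + (a + a^2 + \cdots + a^{n-1})$, we have $1 \leq_S b$, and because inversion reverses $\leq_S$ (recalled at the beginning of Section 3), $b^{-1} \leq_S 1$. Multiplying by $q$ preserves the order, and adding $1$ yields $1 + q b^{-1} \leq_S 1 + q$; moreover $1 + q \in A$ because $A$ is a subparasemifield. Chaining the inequalities produces $a \leq_S 1 + q \in A$, hence $a \in Q_S$.

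The step I expect to be the main obstacle is discovering the right auxiliary element $b$ and the identity $b + a^n = 1 + ba$. Naively one would like to ``extract an $n$-th root'' $q^{1/n}$ of $q$, but $A \cong \Q^+$ does not contain such roots in general, and the order $\leq_S$ on $S$ is only partial, so no direct comparison of $a$ with an element of $A$ is at hand. The telescoping identity functions as a clean algebraic substitute for this missing root: dividing by $b$ converts the bound on $a^n$ into a bound on $a$, and crucially $b^{-1}$ is automatically $\leq_S 1$ because $b$ has a constant term $1$. After that, everything follows from the compatibility of $\leq_S$ with $+$, $\cdot$ and inversion.
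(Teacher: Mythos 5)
Your proof is correct. It is built on the same core trick as the paper's --- a telescoping (geometric-series) auxiliary element whose inverse converts the bound on $a^n$ into a bound on $a$ --- but the two proofs instantiate it differently. The paper first strengthens the hypothesis to $a^n + u = q^n$ for some $q \in A$ and $u \in S$ (easy because $A$ is trivial or a copy of $\Q^+$, where every element is majorized by the $n$-th power of a positive integer), then sets $w = q^{n-1} + q^{n-2}a + \cdots + a^{n-1}$, observes the telescoping identity $aw + u = qw$, and concludes $a + uw^{-1} = q \in A$ in one stroke. You instead keep the raw bound $a^n + d = q$ and work with $b = 1 + a + \cdots + a^{n-1}$, which is the paper's $w$ with $q$ replaced by $1$; the price is the extra order-chaining at the end, where you need anti-monotonicity of inversion ($1 \leq_S b$ gives $b^{-1} \leq_S 1$) to pass from $a \leq_S 1 + qb^{-1}$ to $a \leq_S 1 + q \in A$. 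What your variant buys is that it never touches the internal structure of the prime parasemifield $A$ (no $n$-th powers are extracted or majorized there), so it would apply verbatim even without knowing $A$ is trivial or $\Q^+$; what the paper's variant buys is that, having normalized the right-hand side to $q^n$, it ends with the cleaner conclusion $a \leq_S q$ using only monotonicity of $+$ and $\cdot$, never inversion. Both arguments are complete, and every order-theoretic fact you invoke (compatibility of $\leq_S$ with the operations, its reversal under inversion, transitivity) is indeed recorded at the start of the paper's Section 3.
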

\begin{proof}
Let $A$ be the prime parasemifield of $S$. If  $a^n\in Q_{S}$ then,  clearly, there are $u\in S$ and $q \in A$ such that $a^n + u = q^n$. Set $w = q^{n-1} + q^{n-2}a  + \cdots + qa^{n-2}  + a^{n-1}$. Then  we have 
$$aw+u = q^{n-1}a + q^{n-2}a^2  + \cdots + qa^{n-1}  + a^{n}+u =$$
$$=q^{n-1}a + q^{n-2}a^2  + \cdots + qa^{n-1}  + q^{n}  =qw\ .$$ 
Now, $ a+uw^{-1}  = q \in A$ and therefore $a\in Q_{S}$.
\end{proof}

\begin{corollary}\label{pure}
 Let $X=(x_1, \dots, x_n)$  be a generating tuple for a parasemifield $S$ (as a semiring). Then the associated monoid $\mathcal{C}_X(S)$ is pure.
\end{corollary}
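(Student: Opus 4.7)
The plan is to derive this immediately from Proposition~\ref{pure_preliminary}, essentially by translating the purity condition through the correspondence $\alpha \mapsto \x^\alpha$ that defines $\mathcal{C}_X(S)$.

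First, I would unpack the definition. To prove that $\mathcal{C}_X(S)$ is pure, I need to show $\pure{\mathcal{C}_X(S)} \subseteq \mathcal{C}_X(S)$ (the other inclusion being trivial). So take $\alpha \in \Z^n$ and $k \in \N$ with $k\alpha \in \mathcal{C}_X(S)$. Since $\mathcal{C}_X(S) \subseteq \N_0^n$ and $k \geq 1$, the coordinates of $k\alpha$ are non-negative integers, which forces $\alpha \in \N_0^n$; in particular the monomial $\x^\alpha \in S$ is well-defined.

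Second, by the very definition of $\mathcal{C}_X(S)$, the assumption $k\alpha \in \mathcal{C}_X(S)$ says precisely that $\x^{k\alpha} \in Q_S$. Using the exponent rule $\x^{k\alpha} = (\x^\alpha)^k$, this means $(\x^\alpha)^k \in Q_S$. Now I apply Proposition~\ref{pure_preliminary} to the element $a := \x^\alpha$ with exponent $n := k$ to conclude $\x^\alpha \in Q_S$, i.e., $\alpha \in \mathcal{C}_X(S)$.

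There is essentially no obstacle: all the real content is absorbed into Proposition~\ref{pure_preliminary}, whose clever one-line algebraic identity (telescoping $q^n - a^n$ via $w = q^{n-1} + q^{n-2}a + \cdots + a^{n-1}$) already shows that $Q_S$ is closed under taking $k$-th roots in the multiplicative group $S(\cdot)$. The only thing to check here is the bookkeeping that purity of $\mathcal{C}_X(S)$ as a submonoid of $\Z^n$ is equivalent to this root-closure property of $Q_S$, which is immediate from the monomial correspondence.
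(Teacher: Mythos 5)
Your proof is correct and is exactly the argument the paper intends: the corollary is stated without proof precisely because it follows from Proposition~\ref{pure_preliminary} via the monomial correspondence $\alpha\mapsto\x^{\alpha}$, just as you wrote. Your extra bookkeeping step (that $k\alpha\in\N_0^n$ and $k\ge 1$ force $\alpha\in\N_0^n$, so $\x^{\alpha}$ is defined) is the only non-automatic detail, and you handled it correctly.
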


  In order to show prismality of $\mathcal{C}_X(S)$, we need to first recall the classification of additively idempotent parasemifields, finitely generated as semirings \cite{l-groups}.

\begin{definition}\label{rooted_tree}
  Let us recall the notion of a rooted tree and an $\ell$-group (additively idempotent parasemifield, resp.) that is associated to it 
(for an explicit description with more details see \cite[beginning of Section 4]{l-groups}).

First note that to each lattice-ordered commutative group (an $\ell$-group for short) $G(\oplus, \vee, \wedge)$ corresponds an additively idempotent parasemifield $G(\vee, \oplus)$, and that to
describe the infimum and supremum operations $\wedge, \vee$ in an $\ell$-group $G$, it suffices to describe the corresponding ordering $\leq_G$.

A \emph{rooted tree} $(T, v_0)$ is a (finite, non-oriented) connected graph $T$
containing no cycles and having a specified vertex, the root $v_0$. 

Attach a copy of the group of integers $\Z = \Z_w$ to each vertex $w$ of $T$.   The $\ell$-group $G(T,v_{0})$ associated to $(T,v_{0})$ is an additive group that arises as a direct product of these groups. It remains to describe the partial order on $G(T,v_{0})$.
Let $e_w$ be the generator of the direct summand $\Z_w$.  The ordering $\leq_{G(T,v_{0})}$ on $G(T,v_{0})$ can be  expressed as follows:
 
 Consider $(T,v_{0})$ as a partially ordered set with the greatest element $v_{0}$ where the ordering $\preceq_{(T,v_{0})}$ is given by the graph $T$ that is considered as a Hasse diagram oriented downwards. 
 
First, assume that $T$ is a chain. Then  $\leq_{G(T,v_{0})}$ is defined as the lexicographical ordering on $G(T,v_{0})$ induced by the linear ordering $\preceq_{(T,v_0)}$ on $T$.

Now, consider the general case of a rooted tree. Then for $a,b\in G(T,v_{0})$ set $a \leq_{G(T,v_{0})} b$ if and only if $a \leq_{G(\widetilde{T},v_{0})} b$ for all possible extensions of $T$ into a chain $\tilde{T}$ with the same underlying set of vertices.

By the well-known correspondence of $\ell$-groups and commutative additively idempotent para\-se\-mi\-fields,  
$G(T,v_0)$ can be treated as an  additively idempotent parasemifield and $\leq_{G(T,v_{0})}$ is the natural ordering on this parasemifield (for example see \cite{W,WW}).
\end{definition}

\begin{proposition}\label{canonical_prismal}
  Let $(T, v_0)$ be a rooted tree and $S=G(T,v_0)$ be the additively idempotent parasemifield corresponding to it. Let $e_w$ have the same meaning as in Definition $\ref{rooted_tree}$ and let $X$ be a tuple of canonical generators of the parasemifield $S$ considered as a semiring, i.e., $X=(e_{w_1},-e_{w_1},\dots,e_{w_n},-e_{w_n})$, where $w_1,\dots,w_n$ are all the  pairwise different vertices of the graph $T$. 

Then the associated monoid $\mathcal{C}_{X}(S)$  is prismal.
\end{proposition}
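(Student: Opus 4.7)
My plan is to verify purity and almost prismality of $\mathcal{C}_X(S)$ separately, with the former being immediate from Corollary \ref{pure}. For almost prismality I would proceed by induction on the number $n$ of vertices of $T$. I would first observe that any finitely generated pure submonoid $\mathcal{M} \sub \Z^{m}$ is prismal: for a rational subspace $V$, Proposition \ref{convex}(ii) gives $\conv(\mathcal{M} \cap V) = \conv(\mathcal{M}) \cap V$, a closed finitely generated rational polyhedral cone (Minkowski--Weyl), so $\overline{\mathcal{M} \cap V} = \Z^{m} \cap \conv(\mathcal{M}) \cap V$ is finitely generated by Gordan's lemma. The base case ($T = \{v_0\}$) is then immediate: $\mathcal{C}_X(S) = \{(a,b) \in \N_0^2 : a \leq b\}$ is finitely generated by $(0,1)$ and $(1,1)$, hence prismal.

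For the inductive step I would let $u_1, \dots, u_d$ be the children of $v_0$ in $T$, set $T_j$ to be the subtree rooted at $u_j$, and let $S_j = G(T_j, u_j)$ with canonical tuple $X_j$; by induction each $\mathcal{C}_{X_j}(S_j)$ is prismal. Writing $\alpha \in \N_0^{2n}$ as $(a_0, b_0, \alpha_1, \dots, \alpha_d)$ with $\alpha_j \in \N_0^{2 n_j}$, the description of $\leq_{G(T,v_0)}$ in Definition \ref{rooted_tree} yields the decomposition $\mathcal{C}_X(S) = \mathcal{A} \cup \mathcal{B}$, where $\mathcal{A} = \{\alpha \in \N_0^{2n} : a_0 < b_0\}$ and $\mathcal{B} = \{(a, a, \alpha_1, \dots, \alpha_d) : a \in \N_0,\, \alpha_j \in \mathcal{C}_{X_j}(S_j)\}$. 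I then fix a rational subspace $V$ (permissible by Proposition \ref{simplification}(ii)) and split into two cases. If $\mathcal{A} \cap V = \emptyset$, then $\mathcal{C}_X(S) \cap V = \mathcal{B} \cap V$; the linear embedding $(a, \alpha_1, \dots, \alpha_d) \mapsto (a, a, \alpha_1, \dots, \alpha_d)$ identifies $\mathcal{B}$ with $\N_0 \times \prod_j \mathcal{C}_{X_j}(S_j)$, which is prismal by Theorem \ref{cartesian}; Proposition \ref{alm_prism_equiv} then gives that $\mathcal{B}$ is almost prismal and hence $\overline{\mathcal{B} \cap V}$ is finitely generated.

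If instead $\mathcal{A} \cap V \neq \emptyset$, I would introduce $Y = \{(a_0, b_0, \gamma) \in \N_0^{2n} : a_0 \leq b_0\}$, a finitely generated pure (hence prismal) monoid, and aim to show $\overline{\mathcal{C}_X(S) \cap V} = \overline{Y \cap V}$. The inclusion $\sub$ is immediate from $\mathcal{C}_X(S) \sub Y$. For the reverse, I would fix any $\beta = (c_0, d_0, \beta_1, \dots, \beta_d) \in \mathcal{A} \cap V$ (so $c_0 < d_0$), and observe that for any $\alpha = (a_0, b_0, \alpha_1, \dots, \alpha_d) \in Y \cap V$ and any $m \in \N$, the vector $m\alpha + \beta$ lies in $V \cap \N_0^{2n}$ with $m a_0 + c_0 < m b_0 + d_0$, hence in $\mathcal{A} \sub \mathcal{C}_X(S)$. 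The convex combination $\alpha + \beta/m = (1/m)(m\alpha + \beta) + (1 - 1/m) \cdot 0 \in \conv(\mathcal{C}_X(S) \cap V)$ converges to $\alpha$ as $m \to \infty$, placing $\alpha$ in $\overline{\mathcal{C}_X(S) \cap V}$. The main obstacle is precisely this absorption step: even though $\mathcal{C}_X(S)$ omits many integer points of $Y$ on the hyperplane $\{a_0 = b_0\}$ whose subtree parts lie outside some $\mathcal{C}_{X_j}(S_j)$, a single element $\beta$ with strict inequality in the root coordinate suffices to pull all such points into the closure along scaled approximations, and recognising the right splitting of $V$ according to whether $\mathcal{A}\cap V$ is empty or not is what makes the induction go through.
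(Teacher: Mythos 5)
Your proof is correct, and it takes a genuinely different route from the paper's. The paper argues in two stages: when $T$ is a chain it writes $\mathcal{C}_X(S)=\N_0^{2n}\cap(\pi_{|\Z^{2n}})^{-1}(\mathcal{D}_n)$ for the difference map $\pi(a_1,b_1,\dots,a_n,b_n)=(a_1-b_1,\dots,a_n-b_n)$, proves by a short induction that the lexicographic cone $\mathcal{D}_n=\set{\alpha\in\Z^{n}}{\alpha\leq_{lex}0}$ is prismal, and concludes by Theorems \ref{change} and \ref{epi_prismal} together with Corollary \ref{pure}; for a general tree it observes that $\mathcal{C}_{X}\big(G(T,v_0)\big)$ is the intersection of the monoids $\mathcal{C}_{X}\big(G(\widetilde{T},v_0)\big)$ over all chain extensions $\widetilde{T}$ of $T$, so that Theorem \ref{change} finishes the proof. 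You instead induct on the tree itself through the root decomposition $\mathcal{C}_X(S)=\mathcal{A}\cup\mathcal{B}$, and your two key tools are different: the fact that finitely generated pure monoids are prismal (Minkowski--Weyl plus Gordan's lemma, via Proposition \ref{convex}\ref{4}), and the absorption argument showing that a single element of $\mathcal{A}\cap V$ forces $\overline{\mathcal{C}_X(S)\cap V}=\overline{Y\cap V}$. What each approach buys: yours avoids Theorem \ref{epi_prismal} entirely and invokes Theorem \ref{change} only implicitly (inside the proof of Theorem \ref{cartesian}), and it exhibits each closure $\overline{\mathcal{C}_X(S)\cap V}$ concretely as the closure of an explicit finitely generated monoid; the paper's proof is shorter because the chain-extension definition of $\leq_{G(T,v_0)}$ turns the tree combinatorics into one application of closure under intersections, with no case analysis on $V$. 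Three steps you assert rather than prove should be written out, though all are routine: the inclusion $\mathcal{C}_X(S)\sub\mathcal{A}\cup\mathcal{B}$ needs, when $a_0=b_0$ and $\alpha_{j_0}\notin\mathcal{C}_{X_{j_0}}(S_{j_0})$, a chain extension of $T$ placing the vertices of $T_{j_0}$ above those of all the other subtrees, witnessing $\x^{\alpha}\notin Q_S$; the passage from $Y\cap V\sub\overline{\mathcal{C}_X(S)\cap V}$ to $\overline{Y\cap V}\sub\overline{\mathcal{C}_X(S)\cap V}$ uses that $\overline{\conv(\mathcal{C}_X(S)\cap V)}^{\R^{2n}}$ is closed and convex (i.e., idempotence of the closure operator); and Proposition \ref{alm_prism_equiv} is stated for an embedding $\nu:V\to\R^{n}$ with $V\sub\R^{n}$, so to apply it to $\mathcal{B}$ you should first pad $\N_0\times\prod_{j}\mathcal{C}_{X_j}(S_j)$ with a zero coordinate so that source and image sit in the same ambient space $\R^{2n}$.
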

\begin{proof}
First, assume that the rooted tree is a chain. In this case we see that $S$ is a parasemifield corresponding to an $\ell$-group $(\Z^{n},+)$ with the usual lexicographical ordering $\leq_{lex}$. 
In this case the canonical tuple is of the form  $X=(e_{1},-e_{1},\dots,e_{n},-e_{n})$, where $e_i\in\Z^n$ are the usual vectors of the standard basis (i.e., $e_1=(1, 0, 0, \dots, 0)$, $e_2=(0, 1, 0, \dots, 0)$ etc).

As the next step we show that the monoid $\mathcal{D}_{n}=\set{\alpha\in\Z^{n}}{\alpha\leq_{lex}0}$ is prismal. Clearly, the monoid $\mathcal{D}_{n}$ is pure. Now, we proceed by induction on $n\geq 1$. The case $n=1$ is obvious.  For the induction step, choose  a vector subspace $W$ of $\R^n$. Due to Proposition \ref{simplification}\ref{second}, we may consider that $V$ is defined over $\Q$. If $W\not\sub \{0\}\times \R^{n-1}$ then $\overline{\mathcal{D}_{n}\cap W}=\set{(a_1,\dots,a_n)\in W\cap\Z^{n}}{a_1\geq 0}$ and if $W\sub \{0\}\times \R^{n-1}$ then $\overline{\mathcal{D}_{n}\cap W}=\overline{(\{0\}\times\mathcal{D}_{n-1})\cap W}$. The induction step now follows easily and the monoid $\mathcal{D}_{n}$ is therefore prismal.

Now, $\mathcal{C}_{X}(S)=\set{(a_1,b_1,\dots,a_n,b_n)\in\N_{0}^{2n}}{(a_1-b_1,\dots,a_n-b_n)\in\mathcal{D}_n}$. In other words, $\mathcal{C}_{X}(S)=\N_{0}^{2n}\cap(\pi_{|\Z^{2n}})^{-1}(\mathcal{D}_n)$, where $\pi:\R^{2n}\to\R^n$, $\pi(a_1,b_1,\dots,a_n,b_n)=(a_1-b_1,\dots,a_n-b_n)$. By Theorems \ref{change}, \ref{epi_prismal} and Corollary \ref{pure}, it follows that $\mathcal{C}_{X}(S)$ is prismal.

In the case of a general rooted tree we obtain by the definition of $\leq_{G(T,v_0)}$  that
$$\mathcal{C}_{X}\big(G(T,v_0)\big)=\bigcap\Big\{\mathcal{C}_{X}\big(G(\widetilde{T},v_0)\big)\Big|\ \widetilde{T}\ \text{extends}\ T\ \text{to a chain} \Big\}\ .$$ 
The monoid $\mathcal{C}_{X}\big(G(T,v_0)\big)$ is therefore an intersection of prismal monoids (according to the first part of the proof) and thus, by Theorem  \ref{change}, $\mathcal{C}_{X}\big(G(T,v_0)\big)$ is prismal as well.
\end{proof}

\begin{theorem}[\cite{l-groups}, Theorem 4.1]\label{classify}
Let $S$ be an additively idempotent parasemifield, finitely ge\-ne\-ra\-ted as a semiring. Then $S$ is a (finite) product of
parasemifields of the form
${G(T_i,v_i)}$, where $(T_i, v_i)$ are rooted trees and $G(T_i,v_i)$ are associated additively idempotent parasemifields (or equivalently $\ell$-groups).
\end{theorem}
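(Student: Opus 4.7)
The plan is to work through the well-known term equivalence between additively idempotent parasemifields and lattice-ordered abelian groups ($\ell$-groups). Under this correspondence, the semiring addition of $S$ becomes the lattice supremum $\vee$ in the associated $\ell$-group $G$, while the semiring multiplication is the abelian group operation of $G$. To an $n$-tuple $X = (x_1, \ldots, x_n)$ that generates $S$ as a semiring I would associate the symmetric set $\widetilde X = \{\pm x_1, \ldots, \pm x_n\} \subset G$. The hypothesis that $S$ is finitely generated as a semiring then translates into: every element of $G$ is a finite supremum of $\N_0$-linear combinations of elements of $\widetilde X$. In particular $G$ is finitely generated as an $\ell$-group (since $a \wedge b = -((-a) \vee (-b))$ is recoverable from $\vee$ and group negation), although the semiring-generation condition is strictly stronger.

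Next I would decompose $G$ into indecomposable factors. Invoking Baker--Beynon duality, or directly the classification of finitely generated projective $\ell$-groups due to Busaniche, Cabrer, and Mundici~\cite{BCM}, one represents $G$ as the $\ell$-group of piecewise-linear $\Z$-homogeneous functions on a rational polyhedral fan in some $\R^n$. The indecomposable direct factors of $G$ correspond to the connected components of this combinatorial data. A product decomposition at the level of $\ell$-groups passes immediately to a product decomposition at the level of parasemifields, so it suffices to treat the indecomposable case $S = S_1$, $G = G_1$.

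The crux is then to recognize each indecomposable factor as $G(T,v_0)$ for a rooted tree $(T,v_0)$. Here the extra constraint of finite generation \emph{as a semiring} (rather than merely as an $\ell$-group) is essential, since an arbitrary finitely generated $\ell$-group need not admit such a description. Following the ``chain gives lexicographic order'' picture used in the proof of Proposition~\ref{canonical_prismal}, I would single out a distinguished basis $\{e_w\}$ of the underlying abelian group labelled by the vertices $w$ of a graph, together with a root vertex $v_0$ extracted from the presentation; the ordering $\leq_G$ is then reconstructed from the pairwise comparisons $e_w$ versus $e_{w'}$ forced by the semiring generators. I expect the main obstacle to be a combinatorial rigidity step: one has to show that this comparison data necessarily forms a forest rather than a more general poset, because non-tree configurations (cycles, or diamond shapes with incomparable pairs below a common top) would create elements of $G$ which cannot be written as $\N_0$-sups of monomials in $\widetilde X$. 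Equivalently, such configurations would violate the prismality of the associated monoid $\mathcal C_{\widetilde X}(S) \subset \N_0^{2n}$ developed in Section~\ref{section 1}, giving the required contradiction and closing the classification.
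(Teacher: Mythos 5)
First, a point of logical bookkeeping: this statement is not proved in the paper at all. Theorem \ref{classify} is imported verbatim from \cite{l-groups} (Theorem 4.1 there) and used as a black box, so the only thing to compare against is the route described in the introduction: the term equivalence between additively idempotent parasemifields and $\ell$-groups combined with the Busaniche--Cabrer--Mundici structure theory \cite{BCM}. Your opening moves (term equivalence; translating semiring generation into ``every element is a finite $\vee$ of $\N_0$-combinations of the generators''; noting this is strictly stronger than generation as an $\ell$-group) are consistent with that route. But already your second step is shaky: Baker--Beynon duality and the identification of projectives concern finitely \emph{presented} $\ell$-groups, and \cite{BCM} works with \emph{unital} $\ell$-groups, so representing an arbitrary $G$ that is merely finitely generated as a semiring as ``PL $\Z$-homogeneous functions on a rational polyhedral fan'', and then splitting it into indecomposables via connected components, presupposes structure you have not established.

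The genuine gap, however, is the final step, which is where the entire content of the theorem lives. You assert that non-tree comparison configurations ``would create elements of $G$ which cannot be written as $\N_0$-sups of monomials in $\widetilde X$'' --- but that is exactly the hard direction of the classification, restated as an expectation; no mechanism is offered, and the order on $G(T,v_0)$ is in any case not recoverable from pairwise comparisons of basis vectors $e_w$, being an intersection of lexicographic orders over all linear extensions of the tree. Worse, your fallback --- that such configurations ``would violate the prismality of $\mathcal{C}_{\widetilde X}(S)$'' --- is circular within the logical architecture of this paper: prismality of $\mathcal{C}_X(S)$ for a general (even additively idempotent) parasemifield is Theorem \ref{prismality}, whose proof invokes Theorem \ref{classify} through Proposition \ref{idempotent} and Proposition \ref{canonical_prismal}. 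The only prismality statement available \emph{prior} to the classification is Proposition \ref{canonical_prismal}, which assumes from the outset that $S=G(T,v_0)$ with its canonical generators, hence gives nothing for an unclassified $S$. As it stands, your proposal reduces the theorem either to an unsubstantiated combinatorial claim or to a result that is logically downstream of the theorem itself.
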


 Hence we explicitly understand the structure of additively idempotent parasemifields and of the corresponding monoids $\mathcal{C}_{X}(S)$ (by Proposition \ref{canonical_prismal}).
Given a general parasemifield, we will now show that its monoid is in fact the same as the monoid of some additively idempotent parasemifield.

\begin{proposition}\label{idempotent}
Define a congruence $\sim$ on $S$ by $x\sim y$ if and only if $xy^{-1}\in Q_S$ and $yx^{-1}\in Q_S$ for $x,y\in S$. Then $U=S_{/ \sim}$ is the largest factor-parasemifield of $S$ that is additively idempotent.

If $S$ is generated by a tuple $X=(x_1, \dots, x_n)$ as a semiring then  $U$ is generated as a semiring by the corresponding tuple $X'=(x'_{1}, \dots, x'_{n})$, where $x'_{i}=x_{i_{/ \sim}}$, and  $\mathcal{C}_{X}(S)=\mathcal{C}_{X'}(U)$. 
\end{proposition}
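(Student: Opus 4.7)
The plan is to verify each claim in turn. First, that $\sim$ is a congruence: reflexivity is immediate from $1 \in A \subseteq Q_S$; transitivity and multiplicative compatibility both reduce to $Q_S$ being closed under multiplication. The one delicate point is additive compatibility, where I would argue as follows: if $xy^{-1} \leq_S q$ and $uv^{-1} \leq_S q''$ with $q, q'' \in A \cong \Q^+$, then choosing $r = \max(q, q'') \in A$ and using that $\leq_S$ is preserved by multiplication and addition yields $x + u \leq_S r(y+v)$, so $(x+u)(y+v)^{-1} \in Q_S$. (If $A$ is trivial then $S$ is already additively idempotent and one checks that $\sim$ is just equality, making the whole statement vacuous.) Thus $U := S/\sim$ inherits the structure of a parasemifield.

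To show $U$ is additively idempotent, I would use that $2, \tfrac{1}{2} \in A \subseteq Q_S$ (when $A \cong \Q^+$): the products $(x+x)x^{-1} = 2$ and $x(x+x)^{-1} = \tfrac{1}{2}$ both lie in $Q_S$, so $x + x \sim x$. For maximality, let $\equiv$ be any congruence on $S$ with $S/\equiv$ an additively idempotent parasemifield and write $a \mapsto \tilde a$ for the quotient. Since $\tilde 1 + \tilde 1 = \tilde 1$, the image of $A$ collapses to $\tilde 1$ (every positive integer collapses to $\tilde 1$ by idempotency, and then so do their multiplicative inverses). Consequently, $xy^{-1} \in Q_S$ forces $\widetilde{xy^{-1}} \leq_{S/\equiv} \tilde 1$, and similarly $\widetilde{yx^{-1}} \leq_{S/\equiv} \tilde 1$. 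Viewing $S/\equiv$ as an $\ell$-group, the anti-preservation of order by inversion turns $\widetilde{yx^{-1}} \leq \tilde 1$ into $\widetilde{xy^{-1}} \geq \tilde 1$, giving $\widetilde{xy^{-1}} = \tilde 1$, i.e., $\tilde x = \tilde y$. Hence ${\sim} \subseteq {\equiv}$.

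Finally, $X'$ generates $U$ as a semiring because the quotient map $S \to U$ is a surjective semiring homomorphism. The equality $\mathcal{C}_X(S) = \mathcal{C}_{X'}(U)$ amounts to showing $a \in Q_S \iff a_{/\sim} \in Q_U$ for every element of the form $a = \x^{\alpha}$. The forward direction is immediate since the quotient map is order-preserving and sends $A$ into the prime subparasemifield of $U$. For the converse, $a_{/\sim} \in Q_U$ means $a_{/\sim} \leq_U 1_{/\sim}$, so either $a \sim 1$ (whence $a \in Q_S$ directly) or $a_{/\sim} + c_{/\sim} = 1_{/\sim}$ for some $c \in S$, giving $a + c \sim 1$ and hence $a + c \in Q_S$; the property noted right after the definition of $Q_S$ (namely, $a + b \in Q_S$ implies $a, b \in Q_S$) then yields $a \in Q_S$. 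I expect the most subtle step to be the $\ell$-group argument in the maximality proof, where the additively idempotent structure must be carefully transferred into order-theoretic information in order to recover $\tilde x = \tilde y$ from the two inequalities.
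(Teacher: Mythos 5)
Your proof is correct and takes essentially the same route as the paper's: additive compatibility of $\sim$ via a bound $x+u\leq_S r(y+v)$ with $r$ in the prime subparasemifield, idempotency of $U$ from $2,2^{-1}\in Q_S$, maximality from the collapse of $A$ in any additively idempotent quotient combined with antisymmetry of the canonical order, and the inclusion $\mathcal{C}_{X'}(U)\sub\mathcal{C}_{X}(S)$ via lifting $a_{/\sim}\leq_U 1_{/\sim}$ to $a+c\in Q_S$ and using that $a+c\in Q_S$ implies $a\in Q_S$. The only cosmetic differences are your explicit case split on whether $A$ is trivial (the paper handles both cases uniformly) and your invocation of the $\ell$-group viewpoint in the maximality step, which is harmless but unnecessary since anti-preservation of $\leq$ under inversion and antisymmetry already hold in every parasemifield.
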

\begin{proof}
 First, we show that the relation $\sim$ is indeed a congruence of the parasemifield $S$. The only property that does not seem to be  obvious  is that $x\sim y$ implies $x+a\sim y+a$ for every $x,y,a\in S$. Assume therefore that $x\sim y$. Then $xy^{-1}\in Q_S$ and, consequently, there is $q$ in the prime subparasemifield $A$ such that $x\leq_{S}qy$. We can assume without loss of generality that $1\leq_{S}q$. Hence $x+a\leq_{S}qy+a\leq_{S}qy+qa=q(y+a)$  for every $a\in S$. We obtain that $(x+a)(y+a)^{-1}\in Q_{S}$ and similarly $(y+a)(x+a)^{-1}\in Q_{S}$. The relation $\sim$ is therefore indeed a congruence.
 
 Further, since $x(2x)^{-1}=2^{-1}\in Q_S$ and $2x(x)^{-1}=2\in Q_{S}$ we have $x\sim 2x$. Therefore $U=S_{/ \sim}$ is an additively idempotent parasemifield.  It remains to prove maximality. Let $\varphi:S\to T$ be a parasemifield homomorphism such that $T$ is additively idempotent. If we have $x\sim y$ then, as before, we know that $x\leq_{S}qy$ for some $q\in A$. Hence $\varphi(x)\leq_{T}\varphi(q)\varphi(y)=\varphi(y)$ and similarly, $\varphi(y)\leq_{T}\varphi(x)$. As the relation $\leq_{T}$ is an order in the additively idempotent parasemifield $T$, we get that $\varphi(x)=\varphi(y)$. This means that $U=S_{/ \sim}$ is the largest factor-parasemifield of $S$ which is additively idempotent.
 
 Finally, let $S$ be generated by a tuple $X=(x_1, \dots, x_n)$ as a semiring. Let $\pi:S\to U=S_{/ \sim}$ be the natural epimorphism. Clearly, $U$ is generated by the corresponding tuple $X'=(x'_{1}, \dots, x'_{n})$, where $x'_{i}=\pi(x_i)$. Since $U$ is additively idempotent, $\pi(A)$ is the prime subparasemifield of $U=\pi(S)$. 
 
 Let $\alpha\in\mathcal{C}_{X}(S)$. Then $\x^{\alpha}\in Q_S$ and therefore we have $\big(\x'\big)^{\alpha}=\pi(\x^{\alpha})\in\pi(Q_S)\sub Q_U$. We have obtained that $\mathcal{C}_{X}(S)\sub \mathcal{C}_{X'}(U)$.
 
 Let, on the other hand, be $\alpha\in \mathcal{C}_{X'}(U)$. Then $\pi(\x^{\alpha})=\big(\x'\big)^{\alpha}\leq_{U}1_U=\pi(1_S)$. Hence there is $b\in S$ such that $\pi(\x^{\alpha})+\pi(b)= \pi(1_S)$. It follows that $\x^{\alpha}+b\sim 1_S$ and there is $q\in A$ such that $\x^{\alpha}+b\leq q\cdot 1_S=q$. Thus $\x^{\alpha}\in Q_S$ and  $\alpha\in\mathcal{C}_{X'}(U)$. We have shown that $\mathcal{C}_{X'}(U)\sub \mathcal{C}_{X}(S)$. 
 
 Altogether we have proved that $\mathcal{C}_{X'}(U)= \mathcal{C}_{X}(S)$.
\end{proof}

  Finally, it remains to deal with the dependence of $\mathcal{C}_{X}(S)$ on the generating tuple $X$. We start with an easy lemma:

 \begin{lemma}\label{operations preserve prismality}
Let $X=(x_1, \dots, x_n)$ be a generating tuple for $S$ such that the monoid $\mathcal{C}_X(S)$ is prismal. Then for the generating tuple  $Y=(x_1, \dots, x_n,x_1)$, the monoid $\mathcal{C}_Y(S)$ is prismal as well.
\end{lemma}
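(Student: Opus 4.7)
The plan is to express $\mathcal{C}_Y(S)$ as an intersection of two almost prismal monoids, which are obtained by transferring $\mathcal{C}_X(S)$ through a linear map and by using the prismality of the non-negative orthant.

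First, I would observe that since $Y$ is obtained from $X$ by duplicating $x_1$, we have
\[
\mathcal{C}_Y(S)=\bigl\{(a_1,\dots,a_n,a_{n+1})\in\N_0^{n+1}\bigm|(a_1+a_{n+1},a_2,\dots,a_n)\in\mathcal{C}_X(S)\bigr\}.
\]
Define the linear epimorphism $\pi\colon\R^{n+1}\to\R^n$ by $\pi(a_1,\dots,a_n,a_{n+1})=(a_1+a_{n+1},a_2,\dots,a_n)$. Clearly $\pi(\Z^{n+1})\sub\Z^n$, so by Theorem~\ref{epi_prismal} the monoid $(\pi_{|\Z^{n+1}})^{-1}(\mathcal{C}_X(S))$ is almost prismal.

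Next, the orthant $\N_0^{n+1}$ is itself prismal: the one-dimensional monoid $\N_0$ is trivially prismal (every pure submonoid of $\Z$ of the form $\N_0$ is clearly pure, and any intersection with a vector subspace is either $\{0\}$ or $\N_0$, hence finitely generated), and then Theorem~\ref{cartesian} gives prismality of the product $\N_0^{n+1}$. Now
\[
\mathcal{C}_Y(S)=(\pi_{|\Z^{n+1}})^{-1}(\mathcal{C}_X(S))\cap\N_0^{n+1},
\]
so by Theorem~\ref{change} (closure of almost prismal monoids under intersection) the monoid $\mathcal{C}_Y(S)$ is almost prismal. Purity of $\mathcal{C}_Y(S)$ follows directly from Corollary~\ref{pure}, since $Y$ is a generating tuple of the parasemifield $S$. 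Combining purity and almost prismality, $\mathcal{C}_Y(S)$ is prismal, as required.

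This argument is essentially routine given the machinery already developed in Section~\ref{section 1}; no step poses a substantial obstacle. The only item that requires a tiny amount of care is the verification that $\N_0^{n+1}$ is prismal, but this reduces to an immediate application of Theorem~\ref{cartesian} to the obvious prismal monoid $\N_0\sub\Z$.
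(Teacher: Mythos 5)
Your proof is correct and follows essentially the same route as the paper: both express $\mathcal{C}_Y(S)$ as $\N_0^{n+1}\cap(\pi_{|\Z^{n+1}})^{-1}\big(\mathcal{C}_X(S)\big)$ for the same map $\pi$ and then invoke Theorems~\ref{epi_prismal} and \ref{change}. You merely spell out two details the paper leaves implicit, namely the prismality of $\N_0^{n+1}$ (via Theorem~\ref{cartesian}) and the purity of $\mathcal{C}_Y(S)$ (via Corollary~\ref{pure}), both of which are handled correctly.
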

\begin{proof}
Clearly, $\alpha=(a_1, \dots, a_{n}, a_{n+1})\in\mathcal{C}_Y(S)\sub\R^{n+1}$ if and only if
$$x_{1}^{a_{1}}x^{a_2}_{2}\cdots x_n^{a_n}x_1^{a_{n+1}}=x_1^{a_1+a_{n+1}}x^{a_2}_{2}\cdots x_n^{a_n}\in Q,$$
and this is equivalent to $(a_1+a_{n+1},a_2, \dots,a_{n})\in\mathcal{C}_X(S)\sub\R^{n}$. Set $\pi:\R^{n+1}\to\R^{n}$ as $$\pi(a_1, \dots, a_{n+1})=(a_1+a_{n+1},a_2, \dots,a_{n}).$$ Then, by  Theorems \ref{change} and \ref{epi_prismal}, $\mathcal{C}_{Y}(S)=\N_{0}^{n+1}\cap(\pi_{|\Z^{n+1}})^{-1}\big(\mathcal{C}_{X}(S)\big)$ is prismal.
\end{proof}

  Now we are ready to prove everything together and to show the main result of this section:

\begin{theorem}\label{prismality}
Let $X=(x_1, \dots, x_n)$  be  a generating tuple for a parasemifield $S$ (as a semiring). Then the associated monoid $\mathcal{C}_X(S)$ is prismal.
\end{theorem}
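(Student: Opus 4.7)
First, by Proposition \ref{idempotent} we have $\mathcal{C}_X(S) = \mathcal{C}_{X'}(U)$, where $U = S/{\sim}$ is the largest additively idempotent factor-parasemifield of $S$, still generated as a semiring by the images $X'$ of $X$. Relabelling, I may assume that $S$ itself is additively idempotent. By Theorem \ref{classify}, $S \cong \prod_{i=1}^{k} G(T_i, v_i)$ for rooted trees $(T_i, v_i)$. I identify the multiplicative group $(S, \cdot)$ with $(\Z^{N}, +)$, where $N = \sum_i |V(T_i)|$, and write $\leq_{S}$ for the corresponding $\ell$-group order; under this identification $Q_S$ becomes a submonoid $\mathcal{D}_S$ of $(\Z^N, +)$, and each $Q_{G(T_i, v_i)}$ becomes a submonoid $\mathcal{D}_{G(T_i, v_i)}$. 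I claim $\mathcal{D}_S$ is prismal. For a single chain $T_i$ the monoid $\mathcal{D}_{G(T_i, v_i)}$ is precisely the monoid $\mathcal{D}_{n_i}$ shown prismal in the proof of Proposition \ref{canonical_prismal}. For a general rooted tree, the description of $\leq_{G(T, v_0)}$ as the intersection of lex orders coming from chain extensions $\widetilde T$ yields $\mathcal{D}_{G(T, v_0)} = \bigcap_{\widetilde T} \mathcal{D}_{G(\widetilde T, v_0)}$, which is prismal by Theorem \ref{change}. Finally $\mathcal{D}_S = \prod_i \mathcal{D}_{G(T_i, v_i)}$ is prismal by Theorem \ref{cartesian}.

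For the arbitrary tuple $X = (x_1, \ldots, x_n)$ I would like to write $\mathcal{C}_X(S) = \N_0^n \cap \phi^{-1}(\mathcal{D}_S)$ for the $\Z$-linear map $\phi : \Z^n \to \Z^N$ given by $\phi(e_j) = x_j$, and then conclude via Theorems \ref{change} and \ref{epi_prismal} together with Corollary \ref{pure}. The main obstacle is that $\phi$ need not be surjective: the $\Z$-span of $\{x_1, \ldots, x_n\}$ in $\Z^N$ may well be a proper subgroup, because the $\vee$-operation in the $\ell$-group can produce elements outside a non-$\vee$-closed subgroup, so a semiring-generating tuple is not in general a multiplicative generating tuple of the underlying group. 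To bypass this I augment $X$ by the canonical tuple $Z = (e_{w_1}, -e_{w_1}, \ldots, e_{w_N}, -e_{w_N})$, obtaining a longer generating tuple $Y := (X, Z)$ of length $n + 2N$. The corresponding map $\Phi : \Z^{n+2N} \to \Z^N$ is then surjective (already its restriction to the $Z$-block is), and $\mathcal{C}_Y(S) = \N_0^{n+2N} \cap \Phi^{-1}(\mathcal{D}_S)$. Now Theorem \ref{epi_prismal} gives that $\Phi^{-1}(\mathcal{D}_S)$ is almost prismal, intersecting with the prismal monoid $\N_0^{n+2N}$ via Theorem \ref{change} gives that $\mathcal{C}_Y(S)$ is almost prismal, and Corollary \ref{pure} supplies purity, so $\mathcal{C}_Y(S)$ is prismal.

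It remains to descend from $Y$ back to $X$. Under the embedding $\alpha \mapsto (\alpha, 0)$ the monoid $\mathcal{C}_X(S)$ is identified with $\mathcal{C}_Y(S) \cap V$, where $V := \R^n \times \{0\}^{2N}$ is the $\Q$-defined subspace of $\R^{n+2N}$ cut out by the vanishing of the last $2N$ coordinates. The first assertion of Proposition \ref{alm_prism_equiv} gives that this intersection is prismal in $\Z^{n+2N}$, and its second assertion, applied to the coordinate projection $V \to \R^n$ (a linear isomorphism respecting the integer lattices), transfers almost prismality to $\mathcal{C}_X(S)$ in $\Z^n$. Combined with purity from Corollary \ref{pure}, this completes the proof that $\mathcal{C}_X(S)$ is prismal.
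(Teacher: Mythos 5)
Your proposal is correct, and it shares the paper's overall skeleton -- reduce to the largest additively idempotent quotient via Proposition \ref{idempotent}, invoke the classification of Theorem \ref{classify} together with the chain/intersection/product analysis behind Proposition \ref{canonical_prismal} to obtain a prismal ``canonical'' monoid, then transfer prismality to $\mathcal{C}_X(S)$ by integer linear algebra -- but your execution of the transfer step is genuinely different. The paper expresses the images $x'_j$ as monomials in a multiplicative generating tuple $Y$ normalized so that $y_1y_2=1_U$, writes $\mathcal{C}_{X'}(U)=\nu^{-1}\big(\mathcal{C}_Y(U)\big)$ for the map $\nu$ given by the exponent matrix $\mathbb{A}$, and, since Proposition \ref{alm_prism_equiv} requires $\nu$ to be an \emph{embedding}, repairs the rank of $\mathbb{A}$ by a column-doubling trick that exploits precisely the relation $y_1y_2=1_U$ (this is why that relation is built into the canonical tuple). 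You instead send exponent vectors directly into the group lattice $\Z^N$, where the canonical prismal object is the negative cone $\mathcal{D}_S$, and since Theorem \ref{epi_prismal} requires an \emph{epimorphism}, you enforce surjectivity by concatenating the canonical generators onto $X$ and then descend from $\mathcal{C}_Y(S)$ back to $\mathcal{C}_X(S)$ by slicing with the coordinate subspace $V=\R^n\times\{0\}^{2N}$ and applying both parts of Proposition \ref{alm_prism_equiv}. Your augment-then-slice maneuver is arguably cleaner: it renders both the $y_1y_2=1_S$ normalization and the matrix surgery unnecessary. Two small remarks, neither of which is a gap. First, Theorem \ref{epi_prismal} only asks that $\Phi$ be surjective as a \emph{real-linear} map with $\Phi(\Z^{n+2N})\sub\Z^N$, not surjective on lattices; so the phenomenon you cite as the obstacle (a semiring-generating tuple whose $\Z$-span is a proper subgroup, which indeed can happen, e.g.\ the tuple $\big((1,-1),(-1,1),(1,1),(-1,-1)\big)$ generating $\Z^2$ with componentwise order) does not by itself block a direct application of Theorem \ref{epi_prismal}; still, your augmentation covers uniformly even the case where the real span of $X$ were deficient, so nothing is lost. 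Second, your final use of Proposition \ref{alm_prism_equiv} has source in $\R^{n+2N}$ and target in $\R^n$, i.e.\ ambient spaces of different dimensions, which slightly exceeds its literal statement -- but the paper's own proof applies the proposition in exactly the same cross-dimensional way (to $\nu:\R^n\to\R^k$), so you are on equal footing with the published argument.
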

\begin{proof}
First we show that if $S$ is additively idempotent and finitely generated as a semiring then there is a tuple $Y=(y_1,\dots,y_k)$ of length at least two (i.e., $k\geq 2$)   such that
\begin{itemize}
 \item $y_1y_2=1_S$,
 \item the tuple $Y$ generates $S$ by using only the multiplication  and
 \item the associated monoid $\mathcal{C}_{Y}(S)$ is prismal.
\end{itemize}

First of all, assume that $S=G(T, v)$ for some rooted tree $(T, v)$. Then the monoid associated to the canonical generating tuple $Y$ is  prismal, by Proposition \ref{canonical_prismal}. Clearly, the tuple $Y$ generates $S$ using only multiplication (the corresponding $\ell$-group is generated by the tuple $Y$ only as a semigroup without using the inverse and infimum or supremum operations). In the case that $S$ is trivial, we can set $Y=(1_S,1_S)$.

  Now, if $S$ is an arbitrary additively idempotent parasemifield, then $S$ is a finite product of parasemifields $S_i=G(T_i, v_i)$, by Theorem \ref{classify}. If $Y_i$ are the corresponding canonical generating tuple of $S_i$  and we put $Y=\cup_i Y_i$ (i.e., the tuples are simply concatenated in some order) then $Y$ generates $S$ multiplicatively and $\mathcal{C}_{Y}(S)=\prod_{i}\mathcal{C}_{Y_i}(S_i)$. Since all the monoids $\mathcal{C}_{Y_i}(S_i)$ are prismal, the monoid $\mathcal{C}_{Y}(S)$ is prismal as well, by Theorem \ref{cartesian}. 

Further, we proceed with the general case of a parasemifield $S$ generated by the tuple $X=(x_1, \dots, x_n)$ as a semiring. By Corollary \ref{pure}, the associated monoid $\mathcal{C}_{X}(S)$ is pure. 
As in Proposition \ref{idempotent}, let  $U$ be the largest factor-parasemifield of $S$ which is additively idempotent and let   $X'=(x'_{1}, \dots, x'_{n})$ be the corresponding generating tuple of $U$. By Proposition \ref{idempotent}, we know that $\mathcal{C}_{X}(S)=\mathcal{C}_{X'}(U)\sub \N_0^n$.

By the previous part of the proof, there is a tuple $Y=(y_1,\dots,y_k)\sub U$ that generates $U$ multiplicatively, $y_1y_2=1_U$ and $\mathcal{C}_{Y}(U)$ is prismal. Elements of $X'$ may be expressed as monomials in $Y$, i.e., there is a $k\times n$ matrix $\mathbb{A}=(a_{i,j})$ with non-negative integer entries such that $x'_j=\prod_{i=1}^{k}y_i^{a_{i,j}}$ for every $j=1,\dots,n$. Clearly, for  $\alpha=(a_1,\dots,a_n)^{T}\in\N_{0}^{n}$ we have $\alpha\in\mathcal{C}_{X'}(U)$ if and only if $\mathbb{A}\alpha\in\mathcal{C}_{Y}(U)$. Let $\nu:\R^{n}\to\R^{k}$ be a linear map corresponding to the matrix $\mathbb{A}$. Clearly, $\mathcal{C}_{X'}(U)=\nu^{-1}\big(\mathcal{C}_{Y}(U)\big)$.

We can assume without loss of generality that $\nu$ is an embedding. If this is not the case, then some column (let us say the $j_0$-th column for $j_0\in\{1,\dots,n\}$) is linearly dependent on the other columns. Let $Y'$ be a generating tuple obtained from $Y$ by doubling the element $y_1$, i.e., $Y'=(y_1,\dots,y_k,y_1)$. Since $x'_{j_0}=\Big(\prod_{i=1}^{k}y_i^{a_{i,j}}\Big)\cdot(y_1 y_2)$, we can set 
\begin{displaymath}a'_{i,j}=\left\{
\begin{array}{lllll} 
a_{2,j_{0}}+1 & \text{if}\ i=2 & \& \ j=j_{0} \\
1 & \text{if}\  i=k+1 & \& \ j= j_{0}\\
0 & \text{if}\ i=k+1 & \& \ j\neq j_{0} \\
a_{i,j} & \text{otherwise}
\end{array}\right.\end{displaymath}
and the $(k+1)\times n$ matrix  $\mathbb{A}'=(a'_{i,j})$ expresses elements from $X'$ with help of $Y'$ similarly as before. The $j_0$-th column of $\mathbb{A}'$ is now independent on the others. We can repeat this process till we arrive at a matrix with rank $n$.

Now, the linear map $\nu:\R^{n}\to\R^{k}$ is an embedding and $\nu(\Z^{n})\sub\Z^{k}$. Therefore, by Proposition  \ref{alm_prism_equiv}, $\mathcal{C}_{X'}(U)=\nu^{-1}\big(\mathcal{C}_{Y}(U)\big)$ is prismal if and only if $\nu\big(\mathcal{C}_{X'}(U)\big)=\mathcal{C}_{Y}(U)\cap \mathrm{Im}(\nu)$ is prismal. Finally, since we know that $\mathcal{C}_{Y}(U)$ is prismal, it follows that $\mathcal{C}_{X'}(U)(= \mathcal{C}_{X}(S))$ is prismal as well. Therefore, any monoid associated to any tuple, that generates a parasemifield as a semiring, is prismal.
\end{proof}

\section{Every parasemifield that is finitely generated as a semiring is additively idempotent}\label{section 3}

 For a semiring $T$, the \emph{Grothendieck ring} $G(T)$ is defined in the same way as the Grothendieck group is defined for a (commutative) semigroup. Namely,  on the set $\widetilde{T}=T\times T$ we define  operations $\oplus$ and $\odot$ as $$(x,y)\oplus(x',y')=(x+x',y+y')\ \ \text{and}\ \ (x,y)\odot(x',y')=(xx'+yy',xy'+x'y)$$ and a relation $\approx$ as $$(x,y)\approx (x',y')\ \Leftrightarrow\ (\exists t\in T)\ x+y'+t=x'+y+t$$
for every $x,x',y,y'\in T$.  Now $\approx$ is a congruence on the semiring $(\widetilde{T},\oplus,\odot)$ and $G(T)=\widetilde{T}_{/\approx}$. For an element $x\in T$ denote $[x]$ the corresponding element in $G(T)$, i.e., we have a semiring homomorphisms $T\to G(T)$, defined as $x\mapsto [x]$.

The motivation behind the definition of $G(T)$ is that we would like to work with the difference ring $T-T$. Unfortunately, if $T$ is not additively cancellative, $T-T$ is not well-defined. 
To remedy this, one usually considers the Grothendieck ring $G(T)$ as defined above with the understanding that $(x,y)_{/\approx}\in G(T)$ should correspond to the difference $[x]-[y]$.

 Finally, note that for an element $u\in T$ it holds that $[u]=0$ in $G(T)$ if and only if $z=u+z$ for some element $z\in T$. We will use this easy observation several times in this section, especially in the proof of Theorem \ref{nilpotent}.

\begin{theorem}\label{equivalence_divisibility}
The parasemifield $S$ is additively idempotent if and only if the Grothendieck ring  $G(Q_{S})$ is trivial. 	
\end{theorem}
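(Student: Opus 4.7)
The forward direction is immediate from idempotence: if $x+x=x$ for all $x\in S$, then $Q_S$ is additively idempotent as well, and for any pairs $(x,y),(x',y')\in\widetilde{Q_S}$ the choice $t:=x+y+x'+y'\in Q_S$ yields $x+y'+t=x+y+x'+y'=x'+y+t$, so all pairs collapse to a single $\approx$-class and $G(Q_S)$ is the trivial ring.

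For the converse, my plan is to use triviality of $G(Q_S)$ to extract an absorption equation that actually lives in the prime subparasemifield $A$, and then appeal to the fact that $A$ is either $\Q^+$ or trivial. In the trivial ring $0=1$, so $[1_S]=0$ in $G(Q_S)$; by the remark preceding the theorem, this produces some $z\in Q_S$ with $1_S+z=z$. The key property of $Q_S$ now enters: by its very definition, there exist $q\in A$ and $c\in S$ with $z+c=q$. Adding $c$ to $1_S+z=z$ yields $1_S+q=q$ in $S$, and since $A$ is a subparasemifield of $S$ the addition is inherited, so this equation already takes place inside $A$ itself.

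To conclude, I invoke the classification of the prime subparasemifield recalled at the start of Section~\ref{section 2}: either $A\cong\Q^+$ or $A$ is trivial. No element of $\Q^+$ can satisfy $1+q=q$, so $A$ must be trivial, forcing $1_S+1_S=1_S$ in $S$. Multiplying by an arbitrary $x\in S$ then gives $x+x=x(1_S+1_S)=x$, so $S$ is additively idempotent. I do not anticipate a genuine obstacle in this argument---no geometric or prismality input is needed here, and the essence of the proof is just the observation that every element of $Q_S$ is dominated by an element of $A$, which lets us transfer the absorption equation from $Q_S$ into $A$, where its solvability is severely constrained. This theorem will then serve as the purely algebraic pivot that translates the prismality machinery developed in the previous sections into the desired idempotence conclusion.
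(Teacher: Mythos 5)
Your proposal is correct and follows essentially the same route as the paper's own proof: both extract $1_S+z=z$ in $Q_S$ from triviality of $G(Q_S)$, use the defining property of $Q_S$ to transfer this absorption equation into the prime subparasemifield $A$, and conclude that $A$ cannot be $\Q^{+}$ (by additive cancellativity), hence is trivial, forcing additive idempotence of $S$. The only differences are cosmetic: you spell out the forward direction and the final multiplication by $x$, which the paper leaves implicit.
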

\begin{proof}
 If $S$ is additively idempotent then $Q_{S}$ is additively idempotent as well and therefore the Grothendieck ring $G(Q_{S})$ has to be trivial.

 For the opposite implication, assume that $G(Q_{S})$ is trivial. Then there is $t\in Q_{S}$, such that $t=1+t$. By the definition of $Q_S$, there are $u\in A$ and $s\in S$ such that $t+s=u$. Therefore we obtain $u=1+u$. This is a non-trivial equality within the prime subparasemifield $A$. It follows that $A$ can not be isomorphic to $\Q^+$ (that is additively cancellative) and, therefore, $A$ is trivial. This means that $S$ is additively idempotent.
\end{proof}

Let $X=(x_1,\dots,x_n)$ be a tuple that generates a parasemifield $S$ as a semiring. Let $\mathcal{M}\sub\N_{0}^{n}$ be a subset. Denote by $S_{X}(\mathcal{M})$ the additive subsemigroup of $S(+)$ that is generated by the set $\set{\x^{\alpha}}{\alpha\in \mathcal{M}}$ - recall that $\x^{\alpha}:=x_1^{a_1}\cdots x_n^{a_n}$ if $\alpha=(a_1, \dots, a_n)$. 

Note that, if $\mathcal{M}$ is a submonoid of $\N_{0}^{n}(+)$, then $S_{X}(\mathcal{M})$ is a semiring.

\begin{lemma}\label{decomp_lemma}
  Let $X=(x_1,\dots,x_n)$ be a tuple that generates a parasemifield $S$ as a semiring. Let $\mathcal{C}=\mathcal{C}_{X}(S)$ and  $\mathbf{D}(\mathcal{C})$ be the  decomposition  of $\mathcal{C}$ into monoids as in Corollary $\ref{monoid_decomp}$.
  
  Then for every $\mathcal{D}\in\mathbf{D}(\mathcal{C})$ and every $0\neq\alpha\in\mathcal{D}$ it holds:
  \begin{enumerate}

   \item\label{a1} If $u\in S_{X}(\mathcal{C}+\overline{\mathcal{D}})$ then $\x^{\alpha}u\in S_{X}(\mathcal{C})=Q_S$.
   \item\label{b1} If $u\in Q_S=S_{X}(\mathcal{C})$ then $\x^{\alpha}u\in S_{X}(\mathcal{F})$, where $$\mathcal{F}=\overline{\mathcal{D}}\cup\bigcup \set{\mathcal{E}\in \mathbf{D}(\mathcal{C})}{\dim(\mathcal{E})>\dim(\mathcal{D})}\ .$$
  \end{enumerate}
\end{lemma}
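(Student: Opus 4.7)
The plan is to prove both parts by expanding $u$ as an additive sum of monomials and then, for each individual exponent appearing in $\x^\alpha u$, invoking the combinatorial--geometric properties of the face decomposition provided by Corollary~\ref{monoid_decomp}. Since $\mathcal{C}=\mathcal{C}_{X}(S)$ is prismal by Theorem~\ref{prismality}, that corollary already supplies the key \emph{absorption rules} (iii) and (iv), and the whole lemma amounts to applying them term by term.

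For part~(i), I would first write $u=\sum_j \x^{\beta_j}$ (with repetitions permitted), where by definition of $S_X(\mathcal{C}+\overline{\mathcal{D}})$ each exponent satisfies $\beta_j=\gamma_j+\delta_j$ with $\gamma_j\in\mathcal{C}$ and $\delta_j\in\overline{\mathcal{D}}$. Then $\x^\alpha u=\sum_j \x^{\alpha+\gamma_j+\delta_j}$. For each summand, Corollary~\ref{monoid_decomp}\ref{iv}, applied with the given $0\neq\alpha\in\mathcal{D}$ and $\delta_j\in\overline{\mathcal{D}}$, yields $\alpha+\delta_j\in\mathcal{D}\sub\mathcal{C}$; since $\mathcal{C}$ is a monoid and $\gamma_j\in\mathcal{C}$, we conclude $\alpha+\gamma_j+\delta_j\in\mathcal{C}$. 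Hence $\x^\alpha u\in S_X(\mathcal{C})=Q_S$, as required.

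For part~(ii), expand $u=\sum_j \x^{\gamma_j}$ with $\gamma_j\in\mathcal{C}$, so that $\x^\alpha u=\sum_j \x^{\alpha+\gamma_j}$. I would then handle each $j$ by a dichotomy on whether $\gamma_j\in\overline{\mathcal{D}}$. If $\gamma_j\in\overline{\mathcal{D}}$, then Corollary~\ref{monoid_decomp}\ref{iv} gives $\alpha+\gamma_j\in\mathcal{D}\sub\overline{\mathcal{D}}\sub\mathcal{F}$. If instead $\gamma_j\in\mathcal{C}\setminus\overline{\mathcal{D}}$, then Corollary~\ref{monoid_decomp}\ref{iii} produces some $\mathcal{E}\in\mathbf{D}(\mathcal{C})$ with $\dim(\mathcal{E})>\dim(\mathcal{D})$ and $\alpha+\gamma_j\in\mathcal{E}\sub\mathcal{F}$. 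In either case the exponent lands in $\mathcal{F}$, and so $\x^\alpha u\in S_X(\mathcal{F})$.

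There is no serious obstacle here: the whole geometric content has been extracted into Corollary~\ref{monoid_decomp}, so the proof becomes essentially a bookkeeping exercise. The only points that one has to monitor carefully are that the hypothesis $\alpha\neq 0$ (required by both \ref{iii} and \ref{iv}) is supplied by the statement, that $0\in\overline{\mathcal{D}}$ so the case $\gamma_j=0$ in part~(ii) is absorbed by the first branch of the dichotomy, and that in part~(i) the \emph{outer} addition of $\gamma_j\in\mathcal{C}$ does not take us out of $\mathcal{C}$, which is automatic from the monoid property.
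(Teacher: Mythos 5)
Your proof is correct and follows essentially the same route as the paper's: expand $u$ into monomials and apply Corollary~\ref{monoid_decomp}, part \ref{iv} for exponents coming from $\overline{\mathcal{D}}$ (plus the monoid property of $\mathcal{C}$ in part (i)) and part \ref{iii} for the dichotomy in part (ii). Your write-up is in fact slightly more explicit than the paper's about the bookkeeping (the role of the outer summand $\gamma_j\in\mathcal{C}$ and the case $\gamma_j=0$), but the mathematical content is identical.
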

\begin{proof}
 (i) The element $u$ is a sum of elements of the form $\x^{\gamma+\beta}$, where $\gamma\in\overline{\mathcal{D}}$ and $\beta\in\mathcal{C}$. By Corollary \ref{monoid_decomp}\ref{iv}, we have $\alpha+\gamma\in \mathcal{D}\sub\mathcal{C}$.
Therefore, we obtain  that  $\x^{\alpha}u\in S_{X}(\mathcal{C})=Q_S$.

 (ii) The element $u$ is a sum of elements of the form $\x^{\beta}$, where $\beta\in\mathcal{C}$. Clearly, for $\beta\in \mathcal{C}\cap\overline{\mathcal{D}}$ we have $\alpha+\beta\in \overline{\mathcal{D}}\sub\mathcal{F}$ and therefore is $\x^{\alpha+\beta}\in S_{X}(\mathcal{F})$. If $\beta\in\mathcal{C}\setminus\overline{\mathcal{D}}$ then, by Corollary \ref{monoid_decomp}\ref{iii}, there is $\mathcal{E}\in\mathbf{D}(\mathcal{C})$ such that $\dim(\mathcal{E})>\dim(\mathcal{D})$  and $\alpha+\beta\in \mathcal{E}\sub\mathcal{F}$. Hence  $\x^{\alpha+\beta}\in S_{X}(\mathcal{F})$.
 
  Summing this up, we obtain that $\x^{\alpha}u\in S_{X}(\mathcal{F})$.
\end{proof}

For a ring $R$ let $\mathcal{N}(R)=\set{a\in R}{(\exists\ \ell\in\N)\ a^{\ell}=0}$ denote the \emph{nilradical of} $R$. 

\begin{remark}\label{divisible}
In a semiring $T$ an element $a\in T$ is called \emph{additively divisible} if for every $m\in\N$ there is $b_{m}\in T$ such that $a=m\cdot b_{m}$.

 Let us recall that in a finitely generated commu\-ta\-ti\-ve ring $R$ any additively divisible element $a\in R$ has to be trivial (see e.g., \cite[Examples 1]{divis}).

Note that since the prime subparasemifield $A$ of any parasemifield $S$ is either trivial or isomorphic to the positive rationals $\mathbb Q^+$, it is obvious that $A$ is additively divisible. It follows that every parasemifield $S$ is additively divisible. Likewise, each subsemiring $Q$ of $S$ containing $A$ is additively divisible; in particular, $1_S$ is additively divisible in $Q_S$.
\end{remark}

\begin{theorem}\label{nilpotent}
 Let $X=(x_1,\dots,x_n)$ be a tuple that generates a parasemifield $S$ as a semiring. Then for every $0\neq\alpha\in\mathcal{C}_{X}(S)$ the element $[\x^{\alpha}]$ is nilpotent in $G(Q_S)$.
\end{theorem}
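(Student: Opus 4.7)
\emph{Strategy.} The plan is to prove the claim by downward induction on the dimension $d=\dim(\mathcal{D})$ of the unique face $\mathcal{D}\in\mathbf{D}(\mathcal{C})$ containing $\alpha$, where $\mathcal{C}=\mathcal{C}_X(S)$ is prismal by Theorem~\ref{prismality} and the face decomposition $\mathbf{D}(\mathcal{C})$ together with its properties is provided by Corollary~\ref{monoid_decomp}. Throughout we use the reformulation that $[v]=0$ in $G(Q_S)$ is equivalent to the existence of $z\in Q_S$ with $v+z=z$.

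\emph{Base case} ($d=\dim(\mathcal{C})$). By Corollary~\ref{monoid_decomp}(ii) we have $\overline{\mathcal{D}}=\overline{\mathcal{C}}$, and this monoid is finitely generated (prismality with $V=\R^n$). Consider the subsemiring $R=S_X(\overline{\mathcal{C}})$ of $S$; it is finitely generated, contains $Q_S$, and contains $1_S$, which is additively divisible in $R$ because $1_S=m b_m$ with $b_m\in Q_S\sub R$ for every $m\in\N$ (Remark~\ref{divisible}). Hence the image $[1_S]$ is additively divisible in the finitely generated commutative ring $G(R)$, so by Remark~\ref{divisible} one has $[1_S]=0$ in $G(R)$ and consequently $G(R)=0$. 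Thus some $z\in R$ satisfies $z+\x^\alpha=z$. Writing $z=\sum_k\x^{\gamma_k}$ with $\gamma_k\in\overline{\mathcal{C}}=\overline{\mathcal{D}}$, Corollary~\ref{monoid_decomp}(iv) yields $\alpha+\gamma_k\in\mathcal{D}\sub\mathcal{C}$ termwise, so $\x^\alpha z\in Q_S$. Multiplying $z+\x^\alpha=z$ by $\x^\alpha$ gives $\x^\alpha z+\x^{2\alpha}=\x^\alpha z$, i.e., $[\x^\alpha]^2=[\x^{2\alpha}]=0$ in $G(Q_S)$, so $[\x^\alpha]$ is nilpotent of index $\leq 2$.

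\emph{Inductive step} ($d<\dim(\mathcal{C})$). Assume $[\x^{\alpha'}]$ is nilpotent in $G(Q_S)$ for every $\alpha'\neq 0$ in a face of dimension exceeding $d$, and let $N$ denote the nilradical of $G(Q_S)$. For each $m\in\N$, split the divisor $b_m\in Q_S$ of $1_S$ as $b_m=b_m^{(1)}+b_m^{(2)}$ according as its summands $\x^\beta$ have exponent $\beta\in\mathcal{C}\cap\overline{\mathcal{D}}$ or $\beta\in\mathcal{C}\setminus\overline{\mathcal{D}}$; by Corollary~\ref{monoid_decomp}(iii), $\x^\alpha b_m^{(2)}$ is a sum of $\x^\gamma$ with $\gamma$ in strictly higher-dimensional faces, so $[\x^\alpha][b_m^{(2)}]\in N$ by the inductive hypothesis. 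Combining this with $[\x^\alpha]=m[\x^\alpha][b_m]=m[\x^\alpha][b_m^{(1)}]+m[\x^\alpha][b_m^{(2)}]$ gives
\[
[\x^\alpha]\;\equiv\;m\,[\x^\alpha]\,[b_m^{(1)}]\pmod{N}\qquad\text{for every } m\in\N.
\]
The exponents of each $b_m^{(1)}$ lie in $\mathcal{C}\cap\overline{\mathcal{D}}\sub\overline{\mathcal{C}\cap V}$ with $V=\gen{\overline{\mathcal{D}}}$, and the monoid $\overline{\mathcal{C}\cap V}$ is finitely generated by prismality. The image of $[\x^\alpha]$ in $G(Q_S)/N$ therefore lies in a finitely generated commutative subring in which it is additively divisible, and Remark~\ref{divisible} forces this image to vanish, i.e., $[\x^\alpha]\in N$.

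\emph{Main obstacle.} The subtle point of the inductive step is organizing a single finitely generated commutative subring of $G(Q_S)/N$ that simultaneously accommodates $[\x^\alpha]$ and all divisors $[b_m^{(1)}]$ ($m\in\N$), and to which Remark~\ref{divisible} actually applies; this is precisely where the full strength of prismality—finite generation of $\overline{\mathcal{C}\cap V}$ for every rational subspace $V$, not merely of $\overline{\mathcal{C}}$ itself—becomes indispensable, rather than just the base-case reduction provided by $V=\R^n$.
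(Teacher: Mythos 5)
Your overall strategy---downward induction on the dimension of the face of $\mathbf{D}(\mathcal{C})$ containing $\alpha$, combined with Remark~\ref{divisible} on additively divisible elements of finitely generated rings---is the same as the paper's, and your base case is correct (the paper even gets $[\x^{\alpha}]=0$ there, but index-two nilpotence suffices). The inductive step, however, has a genuine gap at exactly the point you flag as the ``main obstacle'': the finitely generated subring of $G(Q_S)/N$ you invoke is not produced by your argument. The only classes available in $G(Q_S)$ are $[\x^{\beta}]$ with $\beta\in\mathcal{C}$; for $\beta\in\overline{\mathcal{C}\cap V}\setminus\mathcal{C}$ the monomial $\x^{\beta}$ lies in $S$ but \emph{not} in $Q_S$ (by the very definition of $\mathcal{C}=\mathcal{C}_X(S)$, and adding such lattice points is precisely what the closure operation does), so $[\x^{\beta}]$ is not an element of $G(Q_S)$ at all. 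The subring you can actually form is the one generated by $\set{[\x^{\beta}]}{\beta\in\mathcal{C}\cap\overline{\mathcal{D}}}$, and its finite generation does not follow from prismality: $\mathcal{C}\cap\overline{\mathcal{D}}$ is merely a \emph{submonoid} of the finitely generated monoid $\overline{\mathcal{C}\cap V}$, and submonoids of finitely generated monoids need not be finitely generated. Concretely, take $\mathcal{C}'=\mathcal{C}_0\times\N_0\sub\N_0^{3}$, where $\mathcal{C}_0=\set{(i,j)\in\N_{0}^{2}}{i<j\ \text{or}\ i=j=0}$ is the prismal but non-finitely generated monoid of Remark~\ref{remark 1.5}; then $\mathcal{C}'$ is prismal by Theorem~\ref{cartesian}, the face $\mathcal{D}=\set{(i,j,0)}{0<i<j}\cup\{0\}$ satisfies $\dim(\mathcal{D})=2<3=\dim(\mathcal{C}')$, and $\mathcal{C}'\cap\overline{\mathcal{D}}=\mathcal{C}_0\times\{0\}$ has infinitely many atoms $(k,k+1,0)$. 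So nothing guarantees that Remark~\ref{divisible} applies to your subring, and the induction does not close.

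This is exactly why the paper's proof \emph{leaves} $G(Q_S)$ in the inductive step. It introduces the larger subsemiring $Q''=S_X(\mathcal{C}+\overline{\mathcal{D}})$ of $S$, which contains $\x^{\gamma}$ for \emph{every} $\gamma\in\overline{\mathcal{D}}$, not only those in $\mathcal{C}$; inside $R=G(Q'')/\mathcal{N}\left(G(Q'')\right)$ the subring $T$ generated by the classes of these monomials is then genuinely finitely generated, because the monoid $\overline{\mathcal{D}}$ itself is finitely generated by Corollary~\ref{monoid_decomp}, part (i)---no submonoid problem arises. The divisibility argument (essentially your computation, with Lemma~\ref{decomp_lemma}, part (ii), replacing your splitting of $b_m$) then yields only that $\llbracket\x^{\alpha}\rrbracket$ is nilpotent in $G(Q'')$, say $\llbracket\x^{k\alpha}\rrbracket=0$; since the natural ring homomorphism goes from $G(Q_S)$ \emph{to} $G(Q'')$, this by itself says nothing about $G(Q_S)$. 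A further step, also absent from your proposal, transfers the conclusion back: from $u=\x^{k\alpha}+u$ with $u\in Q''$, multiply by $\x^{\alpha}$ and apply Lemma~\ref{decomp_lemma}, part (i), to get $\x^{\alpha}u\in Q_S$ and hence $[\x^{(k+1)\alpha}]=0$ in $G(Q_S)$. Your base case performs this ``multiply by $\x^{\alpha}$ to land in $Q_S$'' transfer correctly; it is the combination of the enlarged semiring $Q''$ and this same transfer mechanism in the inductive step that your proposal is missing.
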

\begin{proof}
 By Theorem \ref{prismality}, the monoid $\mathcal{C}:=\mathcal{C}_{X}(S)$ is prismal. Let $\mathbf{D}(\mathcal{C})$ be the  decomposition  of $\mathcal{C}$ into monoids as in Corollary \ref{monoid_decomp}. Every non-zero element $\alpha\in\mathcal{C}$ belongs into precisely one monoid $\mathcal{D}\in\mathbf{D}(\mathcal{C})$. That is why we prove our assertion by the downward induction on the dimension of monoids $\mathcal{D}$  in $\mathbf{D}(\mathcal{C})$, i.e., from the highest dimension $n_0=\dim(\mathcal{C})$ to the lowest one appearing in $\mathbf{D}(\mathcal{C})$.

 \vspace{2mm}
 
 We start with $n_0=\dim(\mathcal{C})$. Let  $\mathcal{D}\in\mathbf{D}(\mathcal{C})$ have the dimension $n_0$. By Corollary \ref{monoid_decomp}, parts \ref{i} and \ref{ii},  the monoid $\overline{\mathcal{D}}$ is finitely generated and $\mathcal{C}\sub \overline{\mathcal{D}}$. It follows that the semiring $Q'=S_{X}\left(\overline{\mathcal{D}}\right)$  is finitely generated and $1_S$ is an additively divisible element in $Q'$ (as discussed in Remark \ref{divisible}). The ring $G(Q')$ has these properties as well and therefore, by Remark  \ref{divisible}, it must be trivial. Hence $z=1_{S}+z$ in $Q'$ for some $z\in Q'$. 
 
 Now, pick $0\neq\alpha\in\mathcal{D}$. We have $\x^{\alpha}z=\x^{\alpha}+\x^{\alpha}z$.
 By Lemma  \ref{decomp_lemma}\ref{a1}, we obtain that $\x^{\alpha}z\in Q_S$ and therefore $[\x^{\alpha}]=0$ in $G(Q_S)$. In particular, $[\x^{\alpha}]$ is nilpotent in $G(Q_S)$.

 \vspace{2mm}
 
 For the induction step, assume that for the monoid $\mathcal{D}\in\mathbf{D}(\mathcal{C})$ holds that every element $[\x^{\delta}]$ is nilpotent in $G(Q_S)$ whenever the non-zero exponent $\delta\in \mathcal{C}$ lies in a monoid from $\mathbf{D}(\mathcal{C})$ of a bigger dimension than $\dim(\mathcal{D})$. Let  us still denote a few  auxiliary structures.
 \begin{itemize}
  \item Denote $Q''=S_{X}(\mathcal{C}+\overline{\mathcal{D}})$ the corresponding subsemiring of $S$.
  \item For an element $u\in Q''$ we denote $\llbracket u\rrbracket$ the corresponding element in $G(Q'')$ to distinguish it from the notation of analogous elements $[a]\in G(Q_S)$ with $a\in Q_S$.
  \item Denote $R=\ ^{G(Q'')}/_{\mathcal{N}\left(G(Q'')\right)}$ the quotient ring of $G(Q'')$ by its nilradical. 
  \item Let $\pi:G(Q'')\to R$ be the natural ring epimorphism.
  \item Let $T$ be the subring of $R$  generated by the set  $\set{\pi\big(\llbracket\x^{\beta}\rrbracket\big)}{\beta\in\overline{\mathcal{D}}}$. 
 \end{itemize}
   By Corollary \ref{monoid_decomp}\ref{i}, the monoid $\overline{\mathcal{D}}$ is finitely generated and therefore, the ring $T$ is finitely generated as well.

 Now, pick $0\neq\alpha\in \mathcal{D}$. Again, the element $1_S$ is divisible in $Q_S$, by Remark \ref{divisible}. Therefore we have a set of equalities $1_S=m\cdot z_m$, $m\in\N$, where $z_m\in Q_S$. It follows that $\x^{\alpha}=m\cdot \x^{\alpha}z_m$.
 
 Further, let us show that $\pi\big(\llbracket\x^{\alpha}z_m\rrbracket\big)\in T$. By Lemma \ref{decomp_lemma}\ref{b1}, the element $\x^{\alpha}z_m$ is a sum of elements of the form $\x^{\beta}$, where either $\beta\in\overline{\mathcal{D}}$ or $\beta\in \mathcal{E}$ for some monoid $\mathcal{E}\in\mathbf{D}(\mathcal{C})$ such that $\dim(\mathcal{E})>\dim(\mathcal{D})$. In the first case we clearly have that $\pi\big(\llbracket\x^{\beta}\rrbracket\big)\in T$. Let us therefore assume the latter case of $\beta$.
 Then, by the induction hypothesis, $[\x^{\beta}]$ is a nilpotent element in $G(Q_S)$. Since $Q_S\sub Q''$, there is a natural ring homomorphism $G(Q_S)\to G(Q'')$ and it follows that $\llbracket\x^{\beta}\rrbracket$ is a nilpotent element in $G(Q'')$ as well. Therefore $\pi\big(\llbracket\x^{\beta}\rrbracket\big)=0$ in $R$. In particular, $\pi\big(\llbracket\x^{\beta}\rrbracket\big)=0\in T$. Summing this up, we have proved that $\pi\big(\llbracket\x^{\alpha}z_m\rrbracket\big)\in T$.
 
Finally, as we have $\pi\big(\llbracket\x^{\alpha}z_m\rrbracket\big)\in T$ and $\pi\big(\llbracket\x^{\alpha}\rrbracket\big)=m\cdot \pi\big(\llbracket\x^{\alpha}z_m\rrbracket\big)$ for every $m\in\N$, the element $\pi\big(\llbracket\x^{\alpha}\rrbracket\big)\in T$ is now additively divisible  in the finitely generated ring $T$. By Remark \ref{divisible},  it follows that $\pi\big(\llbracket\x^{\alpha}\rrbracket\big)=0$ in $T$ and, of course, the same is true in $R=\ ^{G(Q'')}/_{\mathcal{N}(G(Q''))}$. Hence, there is $k\in\N$ such that  $\llbracket\x^{k\alpha}\rrbracket=0$ in $G(Q'')$.  Therefore, there is $u\in Q''$ such that $u=\x^{k\alpha}+u$.  Multiplying this equality by $\x^{\alpha}$ we obtain that  $\x^{\alpha}u=\x^{(k+1)\alpha}+\x^{\alpha}u$. By Lemma  \ref{decomp_lemma}\ref{a1}, it follows  that  $\x^{\alpha}u\in Q_S$.  This means that $[\x^{(k+1)\alpha}]=0$ in $G(Q_S)$. In other words, $[\x^{\alpha}]$ is  nilpotent in $G(Q_S)$.

This concludes our proof and, indeed, for every $0\neq\alpha\in\mathcal{C}_{X}(S)$ the element $[\x^{\alpha}]$ is nilpotent in $G(Q_S)$.
\end{proof}

\begin{theorem}\label{main-theorem}
 Every parasemifield that is finitely generated as a semiring is additively idempotent.
\end{theorem}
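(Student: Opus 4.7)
The plan is to reduce the statement to the triviality of the Grothendieck ring $G(Q_S)$, which by Theorem \ref{equivalence_divisibility} is equivalent to $S$ being additively idempotent. The two main ingredients are already available: Theorem \ref{nilpotent} tells us that $[\x^{\alpha}]$ is nilpotent in $G(Q_S)$ for every nonzero $\alpha\in\mathcal{C}_X(S)$, while Remark \ref{divisible} tells us that $1_S$ is additively divisible in $Q_S$ (because the prime subparasemifield $A$ of $S$ is either trivial or a copy of $\Q^+$).

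First I would fix any tuple $X=(x_1,\dots,x_n)$ generating $S$ as a semiring and recall that $Q_S$ is generated as a semiring by $\{\x^{\alpha}\mid \alpha\in\mathcal{C}_X(S)\}$. Consequently, $G(Q_S)$ is generated as a ring (with identity $[1_S]=[\x^{0}]$) by the family $\{[\x^{\alpha}]\mid \alpha\in\mathcal{C}_X(S)\}$. Next I would pass to the quotient $R:=G(Q_S)/\mathcal{N}(G(Q_S))$ and denote by $\pi:G(Q_S)\to R$ the canonical projection. By Theorem \ref{nilpotent}, every generator $[\x^{\alpha}]$ with $\alpha\neq 0$ lies in $\mathcal{N}(G(Q_S))$ and hence maps to $0$ in $R$; so $R$ is generated as a ring by its identity $\pi([1_S])$ alone. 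Therefore $R$ is a homomorphic image of $\Z$, and $R\cong \Z/n\Z$ for some integer $n\geq 0$.

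The divisibility of $1_S$ now finishes the argument: for each $m\in\N$ there exists $y_m\in Q_S$ with $1_S=m\,y_m$, so $\pi([1_S])=m\cdot\pi([y_m])$ in $R$. In $\Z/n\Z$ (including the case $n=0$, i.e., $R=\Z$) the multiplicative identity can be additively divisible by every positive integer only when $n=1$, so $R$ must be trivial. Consequently $[1_S]\in\mathcal{N}(G(Q_S))$, and since $[1_S]$ is multiplicatively idempotent, its nilpotency forces $[1_S]=[1_S]^k=0$ in $G(Q_S)$. But then $[a]=[a]\cdot[1_S]=0$ for every $a\in Q_S$, so $G(Q_S)$ is trivial, and Theorem \ref{equivalence_divisibility} yields that $S$ is additively idempotent.

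I do not expect a substantial obstacle at this last stage; all the heavy lifting has been done by the geometric Theorem \ref{prismality} and the dimension-induction argument in Theorem \ref{nilpotent}. The only subtle point is that both features of $Q_S$ must be used at once: it is a \emph{semiring} (so the $\x^{\alpha}$ really do generate $G(Q_S)$ as a ring, producing a cyclic quotient after killing the nilradical), and it is additively divisible (so that this cyclic quotient collapses to zero).
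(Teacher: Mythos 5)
Your proposal is correct and takes essentially the same route as the paper: both reduce the theorem to the triviality of the Grothendieck ring $G(Q_S)$ via Theorem \ref{equivalence_divisibility}, with Theorem \ref{nilpotent} supplying all the nilpotency needed. The only difference is the endgame: the paper simply assumes without loss of generality that $1_S$ is one of the generators, so that Theorem \ref{nilpotent} applies to the (nonzero) exponent of $1_S$ itself and makes the identity $[1_S]$ nilpotent, giving $G(Q_S)=0$ at once, whereas you reach the same conclusion slightly more laboriously by passing to $G(Q_S)/\mathcal{N}\bigl(G(Q_S)\bigr)$, observing that this quotient is cyclic, and invoking the additive divisibility of $1_S$ (Remark \ref{divisible}) a second time.
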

\begin{proof}
 We can assume without loss of generality  that the unity $1_S$ is one of the generators $x_1,\dots, x_n$, in particular $x_1=1_S$.
 By Theorem \ref{nilpotent}, the element $x_1=1_S$ is nilpotent in $G(Q_S)$.
 This implies that $G(Q_S)$ is trivial. By Theorem \ref{equivalence_divisibility}, the parasemifield $S$ is additively idempotent.
\end{proof}

We have proved in this way Theorem \ref{main thm}(a). As we explained in the introduction, the results of \cite{jk,a note} then imply also parts (b) and (c) of the theorem.

Let us conclude by pointing out the following surprising corollary of our results:

\begin{corollary}
 Let $S$ be a parasemifield that is finitely generated as a semiring. Then $S$ is finitely generated as a multiplicative semigroup.
\end{corollary}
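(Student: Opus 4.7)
The plan is to combine Theorem~\ref{main-theorem} with the structural classification of Theorem~\ref{classify}. First, Theorem~\ref{main-theorem} forces the parasemifield $S$ to be additively idempotent, which unlocks Theorem~\ref{classify} and provides an isomorphism
\[ S \cong \prod_{i=1}^{m} G(T_i, v_i) \]
as a finite direct product of parasemifields associated to rooted trees $(T_i, v_i)$. Directly from Definition~\ref{rooted_tree}, each factor $G(T_i, v_i)$ is, as an additive lattice-ordered group, the direct sum of one copy of $\Z$ per vertex of $T_i$; equivalently, its multiplicative group $G(T_i, v_i)(\cdot)$ is a free abelian group of finite rank equal to the number of vertices of $T_i$. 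A finite direct product of finitely generated abelian groups is itself finitely generated, so the multiplicative group $S(\cdot)$ is a finitely generated abelian group.

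It now remains to convert ``finitely generated as a group'' into ``finitely generated as a multiplicative semigroup''. Let $\{g_1, \ldots, g_k\}$ be a finite group-theoretic generating set of $S(\cdot)$. Then I would claim that the enlarged finite set $\{g_1, g_1^{-1}, \ldots, g_k, g_k^{-1}\}$ already generates $S(\cdot)$ as a semigroup, because every element $g_1^{a_1}\cdots g_k^{a_k}$ with $a_i \in \Z$ is a (non-empty) semigroup product of members of this enlarged list, and the identity $1_S$ can be realized as $g_1 \cdot g_1^{-1}$. This produces the required finite multiplicative generating set for $S$.

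There is essentially no obstacle in this plan; the genuine difficulty has been absorbed into the proof of Theorem~\ref{main-theorem}. What remains is merely a routine application of Theorem~\ref{classify} together with the trivial distinction between group-theoretic and semigroup-theoretic generation, which is bridged by adjoining inverses to any group-theoretic generating set.
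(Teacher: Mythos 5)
Your proof is correct and follows essentially the same route as the paper, which likewise deduces the result from Theorem~\ref{main-theorem} and Theorem~\ref{classify} together with the explicit description of the parasemifields $G(T_i,v_i)$ (whose canonical generating tuples $(e_{w},-e_{w},\dots)$ already contain the inverses you adjoin by hand). Your extra paragraph spelling out why a finitely generated abelian group is finitely generated as a semigroup is a harmless elaboration of what the paper treats as immediate.
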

\begin{proof}
 It follows immediately from Theorems \ref{main-theorem}, \ref{classify} and Proposition \ref{canonical_prismal}.
\end{proof}

\section{Acknowledgements}

We wish to thank the anonymous referee for a careful reading of the manusript and for several comments and suggestions that have helped improve the article.


\end{document}